\newcommand{\C}{\mathbb{C}}
\newcommand{\CB}{\mathcal{CB}_n}
\newcommand{\meas}{\mathcal{M}}
\newcommand{\ketpsi}{\ket{\psi}}
\newcommand{\keti}{\ket{i}}
\newcommand{\U}{U}
\newcommand{\Exp}{\textup{Exp}}
\newcommand{\CX}{CX}
\newcommand{\ketzeron}{\ket{0}^{\otimes n}}
\newcommand{\universal}{PU} 
\newcommand{\F}{\mathcal{F}}
\newcommand{\Fq}{\mathcal{F}_\text{quant}}
\newcommand{\G}{\mathcal{G}}
\newcommand{\A}{\mathcal{A}}
\newcommand{\qddots}{
  \raisebox{0.3em}{\ensuremath{\ddots}}%
}
\newcommand{\qldots}{
  \raisebox{0.3em}{\ensuremath{\ldots}}%
}
\newtheorem{theorem}{Theorem}
\newtheorem{definition}[theorem]{Definition}
\newtheorem{remark}[theorem]{Remark}
\newtheorem{example}[theorem]{Example}
\newtheorem{proposition}[theorem]{Proposition}
\title{An introduction to variational quantum algorithms for combinatorial optimization problems}
\author[1,2]{Camille Grange}
\author[1]{Michael Poss}   
\author[1]{Eric Bourreau}
\affil[1]{LIRMM, University of Montpellier, CNRS, France}
\affil[2]{SNCF, Technology, Innovation and Group Projects Department, France}
\date{}
\begin{document}

\setlength{\parskip}{0mm}
\setlength{\abovedisplayskip}{3mm}
\setlength{\belowdisplayskip}{3mm}
\setlength{\parindent}{4mm}
\allowdisplaybreaks

\maketitle
\setcounter{page}{1}
\renewcommand{\thepage}{\arabic{page}}

\begin{abstract}
    Noisy intermediate-scale quantum computers (NISQ computers) are now readily available, motivating many researchers to experiment with Variational Quantum Algorithms (VQAs). Among them, the Quantum Approximate Optimization Algorithm (QAOA) is one of the most popular one studied by the combinatorial optimization community. In this tutorial, we provide a mathematical description of the class of Variational Quantum Algorithms, assuming no previous knowledge of quantum physics from the readers. We introduce precisely the key aspects of these hybrid algorithms on the quantum side (parametrized quantum circuit) and the classical side (guiding function, optimizer). We devote a particular attention to QAOA, detailing the quantum circuits involved in that algorithm, as well as the properties satisfied by its possible guiding functions. Finally, we discuss the recent literature on QAOA, highlighting several research trends. \\
    
    \noindent\textbf{keywords:} Variational Quantum Algorithm, QAOA, Combinatorial Optimization, Metaheuristics
\end{abstract}

\section{Introduction}
This tutorial aims at providing the operational research community with some background knowledge of quantum computing and explore the particular branch of heuristic quantum algorithms for combinatorial optimization. Today, the quantum information theory community is excited by the emergence of quantum computers first theorized in the 1980s. Indeed, since then, some theoretical advantages of quantum algorithms compared with classical algorithms have been proved for several problems. For instance, the Quantum Phase Estimation~\cite{QPE}, which is the subroutine of Shor's algorithm~\cite{Shor}, enables the latter to solve the integer factorization problem efficiently. Moreover, Grover Search~\cite{Grover} finds an element in an unstructured base with a quadratic speedup compared with classical algorithms. This algorithm is also used as a subroutine in dynamic programming algorithms to improve their exponential complexity for problems such as the Travelling Salesman Problem and the Minimum Set Cover~\cite{ProgDyn}. More recently, a polynomial speedup that relies on a Quantum Interior Point Method~\cite{Kerenidis_LP} applies to Linear Programming, Semi-Definite Programming, and Second-Order Cone Programming~\cite{Kerenidis_SOCP}.
Furthermore, a quantum subroutine for the Simplex also provides a polynomial speedup~\cite{Simplex}. 

These algorithms prove quantum advantages on numerous problems, but their implementations require a lot of quantum resources with high quality. Specifically, they need quantum computers with many qubits that can interact two by two and quantum operations that can be applied in a row on qubits without generating noise, namely they can support deep-depth quantum circuits. Such quantum computers are believed to be built in the medium term but current devices are noisy intermediate-scale quantum computers, usually referred to as NISQ computers~\cite{NISQ}. Therefore, NISQ computers cannot handle the implementations of such algorithms. These physical limitations encouraged quantum algorithm theory to look into \emph{lighter} quantum algorithms to implement them on quantum computers today. It consists of hybrid algorithms that require both quantum and classical resources. Indeed, the classical part overcomes the limited and noisy quantum resources, whereas the quantum part still takes partial advantage of quantum information theory. This tutorial focuses on a particular type of hybrid algorithm, that is the class of Variational Quantum Algorithms (VQAs).

Variational Quantum Algorithms are heuristic algorithms that alternate between a quantum circuit and a classical optimizer. They tackle optimization problems of the form
\begin{equation}
\label{eq:min_f}
\min_{x\in\{0,1\}^n} f(x)\,,
\end{equation}
where $f$ is any function defined on $\{0,1\}^n$. 
VQAs are of great interest to the quantum information theory community today because they have the convenient property of an adjustable quantum circuits' depth, making them implementable on the current NISQ computers.
The variational approach of VQAs~\cite{Cerezo} consists of probing the initial search space $\{0,1\}^n$ with a relatively small set of parameters optimized classically. Specifically, these parameters describe a probability distribution over the search space. This description results from a sequence of quantum logic gates: this is a quantum circuit. VQAs take advantage of the quantum computing principle~\cite{Nielsen} that prepares a probability distribution over an exponential search space in a short sequence of quantum gates. The choice of the quantum circuit, and in particular the number of parameters, is a huge stake for VQAs~\cite{Nannicini_Hyb_Perf}.
On the one hand, the more parameters, the more precise the search space probing. On the other hand, too many parameters can harden the classical optimization part. The function that drives this optimization, which depends on the probability distribution and $f$, is also a non-trivial choice and worth investigating~\cite{CVaR}. 

In this tutorial, we provide a mathematical description of Variational Quantum Algorithms and focus on one of them, specifically the Quantum Approximate Optimization Algorithm (QAOA)~\cite{QAOAFarhi}. We shall also devote particular attention to problems in which $f$ is a polynomial function. 
In Section~\ref{sect:basics}, we give a brief introduction to the basics of quantum computing for combinatorial optimization. Notice that the model of quantum computation considered is the gate-based model, also called the circuit model. Then, in Section~\ref{sect:VQA}, we describe the general class of VQAs,
starting by defining the different parts that constitute and define a VQA, namely the quantum circuit (also called \emph{ansatz} in the literature), the classical optimizer, and the guiding function driving the classical optimization. Then, we characterize each part with properties that should be valid for potential theoretical guarantees. Eventually, in Section~\ref{sect:QAOA}, we focus on a particular case of VQAs, the Quantum Approximate Optimization Algorithm. We describe the necessary reformulation of the initial problem~\eqref{eq:min_f} into a Hermitian matrix (also called Hamiltonian in the literature) to be solved by QAOA and analyze this algorithm in light of the previous properties of Section~\ref{sect:VQA}. We also provide a universal decomposition of the QAOA quantum circuit for the general case where $f$ is polynomial while, as far as we know, only the quadratic $f$ case had been treated in the literature so far. Finally, we give a condensed overview of empirical trends and theoretical limitations of QAOA.
We remind some helpful basic notions of linear algebra in  Appendix~\ref{sec:appendix}. Furthermore, the proofs of technical properties of the function optimized in VQAs and specific constructions of the quantum circuit in QAOA are deferred to Appendices~\ref{sec:proof:guiding_func} and~\ref{sec:proof:quantum_circuit}, respectively.

\section{Basics of quantum computing for combinatorial optimization}
\label{sect:basics}
This section aims at providing the basic notions of quantum computing necessary for the understanding of the quantum resolution of combinatorial problems. 
\subsection{Quantum bits}
Let $\ket{0}$ and $\ket{1}$ denote the basic states of our quantum computer (the counterpart of states 0 and 1 in classical computers). The first building block of quantum algorithms is the quantum bit, also called qubit.

\begin{definition}[Qubit]
\label{def:qubit}
We define a qubit as 
\begin{equation}
\label{eq:qubit}
\ket{q}=q_0\ket{0}+q_1\ket{1}\,, 
\end{equation}
where $(q_0,q_1)\in \C^2$ is a pair of complex numbers that satisfies the normalizing condition
$$
|q_0|^2 +|q_1|^2 = 1\,.
$$
We say that $q_0$ and $q_1$ are the \emph{coordinates} of $\ket{q}$ in the basis $(\ket{0}, \ket{1})$.  
\end{definition}
It is often convenient to use the matrix representation of $\ket{0}$ and $\ket{1}$, namely
$$
\ket{0} =  \begin{pmatrix}
    1 \\
    0
\end{pmatrix}
\mbox{ and }
\ket{1} =  \begin{pmatrix}
    0 \\
    1
\end{pmatrix}\,.
$$
With this matrix representation, the qubit $\ket{q}$ defined in~\eqref{eq:qubit} is equal to 
$$ \ket{q} = \begin{pmatrix}
q_0\\
q_1
\end{pmatrix}\,.$$
\begin{example}
\label{ex:+ and -}
Important examples of one-qubit states are $\frac{\ket{0} + \ket{1}}{\sqrt{2}}$ and $\frac{\ket{0} - \ket{1}}{\sqrt{2}}$. These are usually denoted as $\ket{+}$ and $\ket{-}$, respectively. 
\end{example}
The algorithms studied in this paper typically manipulate quantum states of larger dimension. Specifically, let $n$ denote the number of qubits that our quantum computing device is able to manipulate simultaneously. An $n$-qubit state is defined by $2^n$ complex numbers that satisfy the normalizing condition and represent the normal decomposition in the canonical basis.
\begin{definition}[Canonical basis]
The canonical basis of an $n$-qubit state is the set 
\begin{equation*}
    \CB = \left(\bigotimes_{k=1}^n\ket{i^{(k)}}, (i^{(1)},\ldots,i^{(n)}) \in \{0,1\}^n\right)\,,
\end{equation*}
where $i^{(k)}$ represents the state of the $k$-th qubit and $\otimes$ is the tensor product defined in Appendix~\ref{sec:appendix}. 
The canonical basis is the set of all possible combinations of tensor products of $n$ one-qubit basis states, $\ket{0}$ and $\ket{1}$. Thus, in the matrix representation, each canonical basis state is a column vector of $2^n$ components with one component equal to 1 and the rest equal to 0. It directly results that the size of the canonical basis of an $n$-qubit state is $2^n$. For more readability, we omit to write the tensor product between qubits, i.e., we refer to the canonical basis as 
\begin{equation*}
    \CB = (\ket{i}, i \in \{0,1\}^n)\,.
\end{equation*}
\end{definition}

The matrix representation of canonical basis states results from the definition above. We illustrate it for the canonical basis of two qubits:
\begin{equation*}
    \ket{00} = \begin{pmatrix}
1\\
0
\end{pmatrix}\otimes \begin{pmatrix}
1\\
0
\end{pmatrix} = \begin{pmatrix}
1\\
0\\
0\\
0
\end{pmatrix}\,, ~~ ~~
    \ket{01} = \begin{pmatrix}
1\\
0
\end{pmatrix}\otimes \begin{pmatrix}
0\\
1
\end{pmatrix} = \begin{pmatrix}
0\\
1\\
0\\
0
\end{pmatrix}\,,
\end{equation*}
\begin{equation*}
    \ket{10} = \begin{pmatrix}
0\\
1
\end{pmatrix}\otimes \begin{pmatrix}
1\\
0
\end{pmatrix} = \begin{pmatrix}
0\\
0\\
1\\
0
\end{pmatrix}\,,~~ ~~
    \ket{11} = \begin{pmatrix}
0\\
1
\end{pmatrix}\otimes \begin{pmatrix}
0\\
1
\end{pmatrix} = \begin{pmatrix}
0\\
0\\
0\\
1
\end{pmatrix}\,.
\end{equation*}

\begin{definition}[$n$-qubit state]
An $n$-qubit state $\ketpsi$ is a normalized linear combination of the basis states in $\CB$,
\begin{equation*}
    \ketpsi = \mathlarger\sum_{i \in \{0,1\}^n} \psi_i\ket{i}\,,
\end{equation*}
where $(\psi_i)_{i\in\{0,1\}^n} \in \C^{2^n}$ are its coordinates, which satisfy the normalizing condition
\begin{equation}
\label{eq:normalizingCondition}
     \braket{\psi}=\sum_{i \in \{0,1\}^n}|\psi_i|^2 = 1\,.
\end{equation}

\end{definition}
For $n = 1$, we find the definition of a qubit (Definition \ref{def:qubit}), as expected.

\begin{example}
\label{psi+}
For instance, $\frac{\ket{00}+\ket{11}}{\sqrt{2}}$, called $\ket{\Phi^+}$, is a two-qubit state. Its matrix representation is:
\begin{align*}
    \ket{\Phi^+} &= \frac{\ket{00}+\ket{11}}{\sqrt{2}} \\
    &= \frac{1}{\sqrt{2}}\left(\begin{pmatrix}
        1\\0
    \end{pmatrix}\otimes \begin{pmatrix}
        1\\0
    \end{pmatrix} + \begin{pmatrix}
        0\\1
    \end{pmatrix}\otimes \begin{pmatrix}
        0\\1
    \end{pmatrix}\right)\\
    &= \frac{1}{\sqrt{2}}\left(\begin{pmatrix}
        1\\0\\0\\0
    \end{pmatrix} + \begin{pmatrix}
        0\\0\\0\\1
    \end{pmatrix}\right)\\
    &= \frac{1}{\sqrt{2}}\begin{pmatrix}
        1\\0\\0\\1
    \end{pmatrix}.
\end{align*}
\end{example}
\begin{remark}
Our notation $(\psi_i)_{i\in\{0,1\}^n} \in \C^{2^n}$ for the coordinates of $\ketpsi$ in basis $\CB$ is used to simplify the presentation throughout. It is, however, uncommon in the quantum computing literature where different bases may be used.
\end{remark}

\subsection{Quantum gates}

In the model of gate-based quantum computation, qubits are manipulated with quantum gates. Mathematically speaking, these quantum gates are modeled by unitary matrices. More precisely, a quantum gate that manipulates $n$-qubit states is a matrix in $\mathcal{M}_{2^n}(\mathbb{C})$ that modifies the $2^n$ complex coefficients of a quantum state such that they still satisfy the normalizing condition \eqref{eq:normalizingCondition}. 

\begin{definition}[Unitary matrix]
A matrix $\U
\in \mathcal{M}_{2^n}(\mathbb{C})$ is unitary if its inverse is equal to its conjugate transpose (denoted by the dagger symbol $\dagger$) : 
$$\U\U^{\dagger} = \U^{\dagger}\U= I\,.$$ Notice that the application of a quantum gate is logically reversible.
\end{definition}
We easily see that a unitary matrix $\U$ preserves the normalizing condition since, denoting $\ket{\psi'}=\U\ketpsi$, we have
$$
\braket{\psi'}=\bra{\psi}\U^{\dagger}\U\ketpsi=\braket{\psi}\,.
$$
As an illustration, we describe below all the possible one-qubit gates, i.e., all the unitary matrices in $\mathcal{M}_{2}(\mathbb{C})$.
\begin{example}[Generic one-qubit gate]
A generic one-qubit gate $\U= \begin{pmatrix}
    a & b \\
    c & d
\end{pmatrix} \in \mathcal{M}_{2}(\mathbb{C})$ satisfies
\begin{equation*}
    \begin{cases}
    |a|^2+|b|^2 = 1 \\
    a\bar{c} + b\bar{d}= 0\\
    \bar{a}c + \bar{b}d = 0\\
    |c|^2+|d|^2 = 1
    \end{cases}
\end{equation*}
Its application on a qubit $\ket{q} = q_0\ket{0} + q_1\ket{1}$ is:
\begin{equation}
    \label{eq:Ugate}
    \U\ket{q} =  \begin{pmatrix}
    a & b \\
    c & d
\end{pmatrix} \begin{pmatrix}
    q_0 \\
    q_1
\end{pmatrix} = \begin{pmatrix}
    aq_0 + bq_1 \\
    cq_0 + dq_1
\end{pmatrix} = (aq_0 + bq_1)\ket{0} + (cq_0 + dq_1)\ket{1} = \ket{q'}
\end{equation}
\end{example}
We introduce next the circuit representation of a quantum gate, which is a useful formalism to represent unitary matrices. Specifically, Figure~\ref{fig:Ucircuit} represents the application of the quantum gate $U$ to the one-qubit state $\ket{q}$, as in~\eqref{eq:Ugate}, by:
\begin{figure}[H]
$$
    \Qcircuit @C=1em @R=.7em {
\lstick{\ket{q}} & \gate{U} & \rstick{\ket{q'}} \qw \\
}
$$
\caption{Circuit of gate $U$ applying to $\ket{q}$.}
\label{fig:Ucircuit}
\end{figure}
This representation easily generalizes to $n$-qubit quantum gates, where each \emph{horizontal line}
is associated with a qubit. 

\subsection{Quantum circuits}
\label{sec:q_circuit}
We can manipulate quantum gates to build other quantum gates in two different ways. The first one is the \emph{composition}, and the other one is the \emph{tensor product}. 

\begin{definition}[Composition of quantum gates]
The composition of gates only operates between gates acting on the same qubits. Let $k$ be the number of involved qubits. The composition of $U_1\in \mathcal{M}_{2^k}(\C)$ and $ U_2 \in \mathcal{M}_{2^k}(\C)$ consists of the application of $U_1$ followed by the application of $U_2$.
The matrix representation of this composition is the product $U_2U_1$. 
The circuit representation of this composition is illustrated on Figure~\ref{fig:composition} for $k = 3$. 
\begin{figure}[H]
$$
    \Qcircuit @C=1em @R=.7em {
& \multigate{2}{U_1} & \multigate{2}{U_2} & \qw \\
& \ghost{U_1} & \ghost{U_2} & \qw \\
& \ghost{U_1} & \ghost{U_2} & \qw
}
$$
\caption{Composition of $U_1$ and $U_2$.}
\label{fig:composition}
\end{figure}
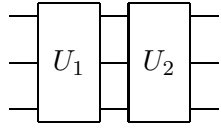
It can be seen as a series sequence of gates. 
\end{definition}
Notice that we read from right to left in the matrix representation, and from left to right in the circuit representation. Moreover, in the circuit representation, qubits are numbered in ascending order from top to bottom which is important for the tensor product circuit representation that follows. 

\begin{definition}[Tensor product of quantum gates]
The tensor product of gates only operates between gates acting on different qubits. Suppose $U_1\in\mathcal{M}_{2^k}(\C)$ applies on the first $k$ qubits and $U_2\in\mathcal{M}_{2^{k'}}(\C)$ applies on the $k'$ following ones. Their tensor product is the application of $U_1$ and $U_2$ on respective qubits in parallel. 
The matrix representation of this tensor product is $U_1\otimes U_2$ (Definition~\ref{def:tensorProduct}). 
The circuit representation is depicted on Figure~\ref{fig:tensorProduct} for $k = 3$ and $k'=2$. 
\begin{figure}[H]
$$
    \Qcircuit @C=1em @R=.7em {
& \multigate{2}{U_1} & \qw  \\
& \ghost{U_1} & \qw \\
& \ghost{U_1} & \qw \\
& \multigate{1}{U_2} & \qw  \\
& \ghost{U_2} & \qw
}    
$$
\caption{Tensor product of $U_1$ and $U_2$.}
\label{fig:tensorProduct}
\end{figure}
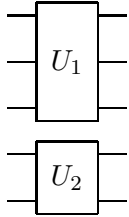
\end{definition}
Notice that when we apply a quantum gate on $k$ qubits of an $n$-qubit system, it supposes that we apply identity gate $I = \begin{pmatrix} 1 & 0 \\ 0 & 1\end{pmatrix}$ on the qubits not concerned. For instance, let us consider a $3$-qubit system on which we apply $U\in\mathcal{M}_{2}(\C)$ on qubit number $2$. The matrix representation of the resulting 3-qubit gate is $I\otimes U\otimes I$, and its circuit representation is illustrated on Figure~\ref{fig:identityTProduct}, where the application of $I$ is usually replaced by a simple wire.
\begin{figure}[H]
$$
    \Qcircuit @C=1em @R=.7em {
& \qw & \qw  \\
& \gate{U}  & \qw \\
&  \qw  & \qw \\
}  $$ 
\caption{Application of $U$ to qubit number 2.}
\label{fig:identityTProduct}
\end{figure}
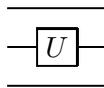
One readily verifies that both composition and tensor product transform unitary matrices into a resulting unitary matrix. 

Throughout, we consider that a quantum algorithm is a quantum circuit acting on $n$ qubits, that is, a sequence of quantum gates' compositions and/or tensor products. These quantum gates can be $k$-qubit gates, for $k\in[n]$.  
However, quantum gates involving many qubits are typically not implementable natively on quantum computers and need to be decomposed into smaller and simpler gates. This set of small gates can be considered as the quantum counterpart of the elementary logic gates used in classical circuit computing to assess the circuit complexity of a classical algorithm.
Thus, an $n$-qubit quantum algorithm is described by a unitary matrix in $\mathcal{M}_{2^n}(\C)$, and we decompose it as a sequence of universal gates (Definition~\ref{def:universalGates}) to obtain the complexity of the quantum algorithm.
\begin{definition}[Set of universal gates]
\label{def:universalGates}
A set of quantum gates $\universal$ is \emph{universal} if we can decompose any $n$-qubit quantum gate through a circuit composed solely of the gates in \universal.
\end{definition}

Fortunately, there exist different universal sets of quantum gates. We introduce below such a set formed of four types of gates. First, we consider the three following families of one-qubit gates, each of which is parametrized by a real number $\theta \in \mathbb{R}$:
\begin{align}
\label{def:RX}
    R_X(\theta) &=  \begin{pmatrix}
    \cos{\frac{\theta}{2}} & -i\sin{\frac{\theta}{2}} \\
    -i\sin{\frac{\theta}{2}} & \cos{\frac{\theta}{2}}
\end{pmatrix}\,,\\
\label{def:RY}
R_Y(\theta) &=  \begin{pmatrix}
    \cos{\frac{\theta}{2}} & -\sin{\frac{\theta}{2}} \\
    \sin{\frac{\theta}{2}} & \cos{\frac{\theta}{2}}
\end{pmatrix}\,,\\
\label{def:RZ}
    R_Z(\theta) &=  \begin{pmatrix}
    e^{-i\frac{\theta}{2}} & 0 \\
    0 & e^{i\frac{\theta}{2}}
\end{pmatrix}\,.
\end{align}
Notice that these gates are often referred to as rotation gates because they correspond to rotations in a certain representation of qubits, known as the Bloch sphere~\cite{BlochSphere}. Second, we consider the two-qubit gate $\CX$: 
\begin{equation}
\label{def:CX}
    \CX = \begin{pmatrix}
    1 & 0 & 0 & 0\\
    0 & 1 & 0 & 0\\
    0 & 0 & 0 & 1\\
    0 & 0 & 1 & 0
\end{pmatrix}\,.
\end{equation}
$\CX$ applies gate $X = \begin{pmatrix}
    0 & 1 \\
    1 & 0
\end{pmatrix}$ on the second qubit if and only if the first qubit is in state $\ket{1}$. Indeed, for instance, we have 
\begin{align*}
    \CX \ket{01} = \begin{pmatrix}
    1 & 0 & 0 & 0\\
    0 & 1 & 0 & 0\\
    0 & 0 & 0 & 1\\
    0 & 0 & 1 & 0
\end{pmatrix}\begin{pmatrix}
    0 \\
    1 \\
    0 \\
    0 
\end{pmatrix}
= \begin{pmatrix}
    0 \\
    1 \\
    0 \\
    0 
\end{pmatrix} = \ket{01}\,,
\end{align*}
and 
\begin{align*}
    \CX \ket{11} = \begin{pmatrix}
    1 & 0 & 0 & 0\\
    0 & 1 & 0 & 0\\
    0 & 0 & 0 & 1\\
    0 & 0 & 1 & 0
\end{pmatrix}\begin{pmatrix}
    0 \\
    0 \\
    0 \\
    1 
\end{pmatrix}
= \begin{pmatrix}
    0 \\
    0 \\
    1 \\
    0 
\end{pmatrix} = \ket{10}\,.
\end{align*}
In other words, we can define $\CX$ on the canonical basis as $$\CX\ket{a,b} = \ket{a,b\oplus a}\,,$$ for $a,b\in\{0,1\}$, where $\oplus$ is the addition modulo 2.

\begin{remark}[Notation on gates indexes]
\label{remark:notationGates}
Henceforth, we use the notation $R_{X,i}$ for the application of $R_X$ on qubit $i$ (and the application of identity matrix on the remaining qubits). We do the same with $R_{Y,i}$ and $R_{Z,i}$. We note $\CX_{i,j}$ the application of $\CX$ gate to qubits $i$ and $j$ : $X$ is applied to qubit $j$ if and only if qubit $i$ is in state $\ket{1}$.
\end{remark}
\begin{theorem}[Universal gates \cite{Nielsen}]
\label{th:universality}
The set of one-qubit gates and the  $\CX$ gate~\eqref{def:CX} is universal. Thus, because any one-qubit gate is the composition of rotation gates~\eqref{def:RX}--\eqref{def:RZ}, the set $\universal = \{R_X(\alpha), R_Y(\beta), R_Z(\gamma), \CX : \alpha, \beta, \gamma \in \mathbb{R}\}$ is universal. 
\end{theorem}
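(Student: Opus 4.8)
The plan is to prove the two assertions in turn, following the classical argument of Nielsen and Chuang~\cite{Nielsen}. The first assertion --- that arbitrary one-qubit gates together with $\CX$ form a universal set --- I would establish through three successive reductions applied to a target gate $\U\in\mathcal{M}_{2^n}(\C)$.

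First I would reduce $\U$ to a product of \emph{two-level} unitaries, i.e. unitaries acting as the identity on all but a two-dimensional subspace spanned by two basis states $\ket{i},\ket{j}\in\CB$. Concretely, by a Gaussian-elimination scheme one chooses two-level unitaries that successively cancel the subdiagonal entries of the first column of $\U$, then of the second, and so on; the accumulated product reduces $\U$ to the identity, and inverting expresses $\U$ as a finite product of two-level unitaries (at most $2^{n-1}(2^n-1)$ of them).

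Next I would implement an individual two-level unitary acting on $\ket{i},\ket{j}$ using $\CX$ and one-qubit gates. The idea is to fix a Gray code linking the bitstrings $i$ and $j$ and to conjugate by $\CX$ gates (which merely permute basis states) so that the nontrivial $2\times 2$ action is transported onto two strings differing in a single bit. The operation then becomes a one-qubit gate on that bit, controlled on the remaining $n-1$ qubits, which decomposes into singly controlled one-qubit gates. Finally, a singly controlled gate $\text{controlled-}V$ reduces to $\CX$ and one-qubit gates via the $ABC$ decomposition: one writes $V=e^{i\alpha}AXBXC$ with $ABC=I$, whence $\text{controlled-}V$ is realized by applying $C$, $\CX$, $B$, $\CX$, $A$ on the target while a diagonal phase gate $\mathrm{diag}(1,e^{i\alpha})$ acts on the control. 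Chaining these three reductions proves the first assertion.

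For the second assertion I would invoke the Euler (ZYZ) decomposition: the unitarity constraints displayed in the Generic one-qubit gate example force any one-qubit gate $\U$ to equal $e^{i\alpha}R_Z(\beta)R_Y(\gamma)R_Z(\delta)$ for suitable real $\alpha,\beta,\gamma,\delta$. Hence every one-qubit gate is, up to a global phase, a composition of the rotation gates~\eqref{def:RX}--\eqref{def:RZ}, and substituting this into the first assertion yields the universality of $\universal$. The main obstacle, and the step deserving the most care, is twofold: the bookkeeping of the Gray-code conjugation that localizes the two-level action on a single qubit, and the handling of the residual global phase $e^{i\alpha}$ --- which the determinant-one rotation gates cannot reproduce exactly, so the statement must be read up to an (unobservable) global phase.
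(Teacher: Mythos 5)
The paper does not prove this theorem at all---it states it with a citation to Nielsen and Chuang~\cite{Nielsen}---and your sketch is a faithful reconstruction of precisely that reference's argument: reduction to two-level unitaries, Gray-code-style transport of the two-level action via basis-permuting conjugations, the $ABC$ decomposition $V=e^{i\alpha}AXBXC$ for singly controlled gates, and the $ZYZ$ Euler decomposition for the second assertion, with the residual global phase correctly dismissed as unobservable (as the paper itself does in a footnote). Your proposal is correct and takes essentially the same route as the source the paper defers to.
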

In comparison, the sets of gates \{NAND\}, \{NOR\}, \{NOT, AND\} and \{NOT, OR\} are universal for classical computation. Indeed, we can compute any arbitrary classical function with them.
In view of the above, we typically consider that the quantum counterpart of the classical number of elementary operations is the number of universal gates used to decompose the circuit. Of course, this decomposition depends on the set of universal gates $\universal$ considered, but the number of gates required is the same with any set, modulo a  multiplicative constant~\cite{Nielsen}. 
Thereafter, we only consider the set $\universal$ defined in Theorem~\ref{th:universality}. This choice is motivated by the algorithms we study in this paper, as it will be convenient to express them with this set. 
Furthermore, if we consider the family of circuits $(\mathcal{C}_n)_{n\in\mathbb{N}}$ where $\mathcal{C}_n$ is a circuit on $n$ qubits and is decomposed on $\mathcal{O}(poly(n))$ universal quantum gates, then this family is said to be \emph{efficient}.

\subsection{Non-classical behaviors}
The quantum algorithms we present in this paper rely on three characteristics of quantum states with no classical equivalent: measurement, superposition, and entanglement. 
Let us present these notions through the bare minimum mathematical background.

\subsubsection{Measurement}

We need to measure a quantum state $\ketpsi$ to get information from it. Otherwise, no information is accessible. The peculiar property of measurement is that it only extracts partial information from the quantum state: the single measurement output of $\ketpsi$ is a bitstring. 

\begin{definition}[Measurement]
In the gate-based quantum model, the measurement $\meas$ of an $n$-qubit state $\ketpsi = \mathlarger\sum_{i \in \{0,1\}^n} \psi_i\ket{i}$ outputs the $n$-bitstring $i$ with probability $|\psi_i|^2$. After having been measured, state $\ketpsi$ no longer exists: it has been replaced by the state $\ket{i}$. 
\end{definition}
For example, measuring qubit $\ket{q} = q_0\ket{0} + q_1\ket{1}$ outputs $0$ with probability $|q_0|^2$ and $1$ with probability $|q_1|^2$, and changes the state $\ket{q}$ to $\ket{0}$ and $\ket{1}$, respectively. A measurement appears as a loss of information. Indeed, we describe an $n$-qubit state by $2^n$ normalized complex coefficients, but we only extract an $n$-bitstring after measuring it. The perfect knowledge of the probabilities representing a given quantum state $\ketpsi$, namely the square module of each of its coordinates $(|\psi_i|^2)_{i\in\{0,1\}^n} \in [0,1]^{2^n}$, can be obtained only if we measure $\ketpsi$ an infinite number of times. Notice that it requires resetting state $\ketpsi$ after each measurement. 
\begin{remark}[Sampling of quantum states]
\label{rem:sampling}
In reality we are limited to approximating a given quantum state through sampling. In particular, if $\ketpsi$ is the result of an algorithm, this means we have to repeat the same algorithm for every measurement of $\ketpsi$ we wish to perform. 
\end{remark}

\subsubsection{Superposition}

Classically, the state of an $n$-bit computer is given by a bitstring in $\{0,1\}^n$. We have seen so far that the state of an $n$-qubit quantum computer is given by its coordinates $(\psi_i)_{i\in\{0,1\}^n} \in \C^{2^n}$, which satisfy $\mathlarger\sum_{i\in\{0,1\}^n}|\psi_i|^2=1$. In general, more than one of the coordinates are different from 0, meaning that measuring $\ketpsi$ may result in different bitstring $i \in \{0,1\}^n$. 
\begin{definition}[Superposition]
A quantum state $\ketpsi$ is said in superposition if $ \ketpsi = \mathlarger\sum_{i \in \{0,1\}^n} \psi_i\ket{i}$ where there are at least two terms with non-zero coefficients in the sum. A quantum state that is not a basis state is in superposition.
\end{definition}
The following Hadamard gate is the usual one-qubit gate that produces superposition starting from a canonical basis state. 
\begin{example}[Hadamard gate]
\label{ex:HadamardGate}
The Hadamard gate 
$H = \frac{1}{\sqrt{2}}\begin{pmatrix}
    1 & 1 \\
    1 & -1
\end{pmatrix}$ 
is essential in quantum computing because it creates superposition starting from a basis state. We obtain the state $\ket{+}$, respectively  $\ket{-}$, of Example~\ref{ex:+ and -} by applying $H$ on $\ket{0}$, respectively $\ket{1}$:
\begin{align*}
H\ket{0} &= \frac{\ket{0} + \ket{1}}{\sqrt{2}} = \ket{+}\,,\\
H\ket{1} &= \frac{\ket{0} - \ket{1}}{\sqrt{2}} = \ket{-}\,.
\end{align*}
\end{example}
\begin{example}[Uniform superposition]
\label{ex:unif_superposition}
The two states $\ket{+}$ and $\ket{-}$ are in uniform superposition because both have the same probability of being measured as $0$ or $1$. In general, an $n$-qubit state uniformly superposed is equal to $\frac{1}{\sqrt{2^n}}\mathlarger\sum_{j \in \{0,1\}^n} e^{i\alpha_j}\ket{j}$, with $\alpha_j \in [0, 2\pi[$, $\forall j \in \{0,1\}^n$. In what follows, we shall often use the uniformly superposed $n$-qubit state $\ket{+}^{\otimes n} = \frac{1}{\sqrt{2^n}}\mathlarger\sum_{i \in \{0,1\}^n}\ket{i}$.
\end{example}
Notice that applying an $n$-qubit quantum gate $\U$ to $\ketpsi$ possibly modifies the $2^n$ coordinates of $\ketpsi$ since
$$
\U\ketpsi = \sum_{i\in\{0,1\}^n}\psi'_i\keti\,,
$$
where possibly each $\psi'_i$ differs from $\psi_i$. This is the case, for instance, when applying the tensor product of $n$ Hadamard gates, each applied to a qubit initially in state $\ket{0}$, specifically
\begin{equation}
\label{eq:Hnket0}
H^{\otimes n}\ket{0}^{\otimes n} = \bigotimes_{i=1}^n H\ket{0} = \bigotimes_{i=1}^n \left(\frac{\ket{0}+\ket{1}}{\sqrt{2}} \right) = \frac{1}{\sqrt{2^n}}\mathlarger\sum_{i \in \{0,1\}^n}\ket{i} = \ket{+}^{\otimes n}\,.
\end{equation}
Equation~\eqref{eq:Hnket0} illustrates the potential benefit of quantum circuits: applying $\mathcal{O}(n)$ universal one-qubit gates impacts the exponentially many coefficients of $\ketpsi$. Indeed, one readily verifies that $H = R_X(\pi)R_Y(\frac{\pi}{2})$ modulo a global phase\footnote{Two quantum states $\ketpsi$ and $\ket{\psi '} = e^{i\alpha}\ketpsi$, with $\alpha\in[0,2\pi[$, that only differ by a global phase are indiscernible by measurement. Thus, we do not consider global phase of quantum states nor quantum gates.}, so $H^{\otimes n}$ amounts to applying $2n$ universal one-qubit gates. 

\subsubsection{Entanglement}
\label{sec:entanglement}
Each quantum state is either a product state or an entangled state. Entanglement has the peculiar and helpful property that we can apply a circuit only on a part of the $n$-qubit system, and as a result, the whole system is affected.
\begin{definition}[Product state]
An $n$-qubit state is a product state if it is the tensor product of $n$ one-qubit states.
In other words, an $n$-qubit state $\ketpsi$ is a product state if it exists $2n$ complex coefficients $(q_0^{(j)}, q_1^{(j)})_{j \in [n]}$ such that 
\begin{equation*}
    \ketpsi = \bigotimes_{j = 1}^{n}(q_0^{(j)}\ket{0} + q_1^{(j)}\ket{1}),  \text{ with }  |q_0^{(j)}|^2+|q_1^{(j)}|^2 = 1, \forall j \in [n]\,.
\end{equation*}
Thus, each state of a qubit that composes $\ketpsi$ can be described independently of the states of the other.
\end{definition}
If an $n$-qubit state is not a product state, it is an entangled state.
\begin{definition}[Entangled state]
An $n$-qubit state $\ketpsi = \mathlarger\sum_{i \in \{0,1\}^n} \psi_i\ket{i}$ is entangled if the numerical values of its coordinates $(\psi_i)_{i\in\{0,1\}^n} \in \C^{2^n}$ admit no solution $(q_0^{(j)}, q_1^{(j)})_{j \in [n]} \in \mathbb{C}^{2n}$ to the system
\begin{equation*}
\begin{cases}
   \mathlarger\sum_{i \in \{0,1\}^n} \psi_i\ket{i} = \mathlarger\bigotimes_{j = 1}^{n}(q_0^{(j)}\ket{0} + q_1^{(j)}\ket{1})\\
   |q_0^{(j)}|^2+|q_1^{(j)}|^2 = 1 , \forall j \in \llbracket 1, n \rrbracket
\end{cases}
\end{equation*}
It means that operations performed on some coordinates of the entangled state can affect the other coordinates without direct operations on them.
\end{definition}
Notice that when an $n$-qubit system is entangled, it makes no sense to speak about qubit number $k \in [n]$ because isolated qubits are not defined. Notice also that there is a difference between superposition and entanglement. A state is in superposition if at least two non-zero coefficients are in its basis decomposition. A state is entangled if it cannot be written as a tensor product of independent qubits, implying that it is in superposition. 

To illustrate the notion of entanglement, 
we consider the following quantum circuit on Figure~\ref{fig:entanglingCircuit}. Starting from a two-qubit product state $\ket{00}$, it results an entangled state. 
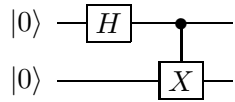
\begin{figure}[H]
$$
    \Qcircuit @C=1em @R=.7em {
\lstick{\ket{0}} & \gate{H} & \ctrl{1} &  \qw \\
\lstick{\ket{0}} & \qw & \gate{X}  & \qw
}
$$
\caption{Entangling circuit.}
\label{fig:entanglingCircuit}
\end{figure}
This circuit is the application of Hadamard gate on qubit $1$, followed by $CX_{1,2}$.
The quantum state that results is $\ket{\Phi^+}$ of Example~\ref{psi+}:
\begin{equation*} 
	\begin{split}
	  CX_{1,2}(H\otimes I)\ket{00} & = CX_{1,2}\left(\frac{1}{\sqrt{2}}(\ket{00} + \ket{10})\right)\\
	 & = \frac{1}{\sqrt{2}}\left(CX_{1,2}\ket{00} + CX_{1,2}\ket{10}\right)\\
	 & = \frac{1}{\sqrt{2}}(\ket{00} + \ket{11})\\
	 &= \ket{\Phi^+}\,.
	\end{split}
\end{equation*}
We prove now by contradiction that $\ket{\Phi^+}$ is entangled.
Suppose that $\ket{\Phi^+}$ is a product state. Thus, there exists $(q_0^{(0)},q_1^{(0)})\in\mathbb{C}^2$ such as $|q_0^{(0)}|^2 + |q_1^{(0)}|^2 = 1$ and $(q_0^{(1)},q_1^{(1)})\in\mathbb{C}^2$ such as $|q_0^{(1)}|^2 + |q_1^{(1)}|^2 = 1$, satisfying:
\begin{align*}
    \ket{\Phi^+} &= (q_0^{(0)}\ket{0} + q_1^{(0)}\ket{1})\otimes (q_0^{(1)}\ket{0} + q_1^{(1)}\ket{1})\\
    &= q_0^{(0)}q_0^{(1)}\ket{00} +q_0^{(0)}q_1^{(1)}\ket{01} + q_1^{(0)}q_0^{(1)}\ket{10} +q_1^{(0)}q_1^{(1)}\ket{11}\,.
\end{align*}
Because the decomposition of a quantum state's coordinates is unique in the canonical basis, it follows by identification: 
\begin{equation*}
    \begin{cases}
   q_0^{(0)}q_0^{(1)} &= \frac{1}{\sqrt{2}} \\
   q_0^{(0)}q_1^{(1)} &= 0 \\
   q_1^{(0)}q_0^{(1)} &= 0 \\
   q_1^{(0)}q_1^{(1)} &= \frac{1}{\sqrt{2}}
\end{cases}
\end{equation*}
This equation system admits no solution. We deduce that $\ket{\Phi^+} = \frac{\ket{00} + \ket{11}}{\sqrt{2}}$ is not a product state, hence it is entangled. 

\section{Variational Quantum Algorithms for optimization}
\label{sect:VQA}
Some of the quantum algorithms that tackle combinatorial optimization problems are Variational Quantum Algorithms (VQAs)~\cite{Cerezo}. They are hybrid algorithms because they require both quantum and classical computations. VQAs are studied today because they represent an alternative approach that reduces the quality and quantity of the quantum resources needed~\cite{Peruzzo}. Specifically, they are designed to run on the NISQ era \cite{NISQ} where quantum computers are noisy with few qubits: for instance, they harness low-depth quantum circuits.

In this section, we consider optimization problems of the form
\begin{equation}
    \underset{x\in\{0,1\}^n}{\min}f(x)\,, \tag{\ref{eq:min_f}}
\end{equation}
where $f$ is any function defined on $\{0,1\}^n$. We note $\F$ the set of optimal solutions. 

\subsection{General description}
\label{sect:VQA_gen_desc}
Variational Quantum Algorithms (VQAs) are hybrid algorithms that, given an input $\ketzeron$, alternate between a quantum and a classical part. Henceforth, we note $\ket{0_n}$ the state $\ketzeron$ to ease the reading.
Let us first provide a high-level description
of the key elements of VQAs that are detailed in this section. Let $d\in\mathbb{N}$, the three key elements of VQAs are:
\begin{itemize}
    \item a \emph{parametrized quantum circuit}, $U : \mathbb{R}^d \rightarrow \mathcal{M}_{2^n}(\mathbb{C})\,,$
    \item a \emph{guiding function}, $g : \mathbb{R}^d \rightarrow \mathbb{R}\,,$
    \item and a \emph{classical optimizer}, which is an algorithm $\A$ that optimizes $g$ over space $\mathbb{R}^d\,.$
\end{itemize}
These elements, the parametrized quantum circuit and the two others, are detailed and illustrated in Subsections~\ref{sec:qu_part} and~\ref{sec:class_part}, respectively. Before that,
we provide a general overview of VQAs in Algorithm~\ref{alg:VQA}.

The main idea of a VQA is as follows. The main loop of the algorithm executes the following steps until the classical optimizer stops according to a given stopping criterion. First, given $\theta\in\mathbb{R}^d$, the \emph{quantum part} executes the parametrized quantum circuit $U(\theta)$ for several times to sample the quantum state (see Remark~\ref{rem:sampling}). It results in a distribution probability over $\{0,1\}^n$ that we note $\{p_\theta(x) : x\in\{0,1\}^n\}$. Second, the \emph{classical part} computes the cost of this state through the evaluation of the guiding function according to the sampling results, specifically, $g(\theta) = G(p_\theta, f)$ for a given function $G$ that is constructed from $f$ and $p_\theta$. More precisely, for a given $\theta$, the computation of $G$ requires only the values $f(x)$ for $x$ such that $p_\theta(x) > 0$. Eventually, this cost value is given to the classical optimizer $\A$, which outputs a new parameter in order to minimize $g$. Notice that, between the two parts the best-found solution is possibly updated by a classical computer.

\begin{algorithm}[!ht]
\label{alg:VQA}
 \KwIn{guiding function $g$, parametrized quantum circuit $U$, classical optimizer $\A$, initial parameter $\theta_0$}
\caption{Variational Quantum Algorithm}
 \KwOut{approximate minimum $f^*$ and the corresponding minimizer $x^*$}
 $\theta \leftarrow \theta_0$\;
 $f^* \leftarrow +\infty$\;
 $x^* \leftarrow 0^n$\;
 \While{stopping criterion of the classical optimizer is false}
{
\Begin(\textbf{quantum part}){
 \For{size of the sampling}{
 measure the
 state $U(\theta)\ketzeron$, outputting $x$ for some $x\in \{0,1\}^n$\;}
let $p_\theta(x)$ be the frequency of $x\in \{0,1\}^n$ in the above sampling\;}
 \Begin(\textbf{solution update}){
 \For{$x$ such that $p_\theta(x) > 0$}
{ compute $f(x)$\;
 \If{$f(x) < f^*$}{
 $f^* \leftarrow f(x)$\;
 $x^* \leftarrow x$\;}}}
 \Begin(\textbf{classical part}){
 compute $g(\theta)=G(p_\theta, f)$\;
 given $g(\theta)$ (and possibly its derivatives), $\A$ outputs $\theta'$\; 
 $\theta \leftarrow \theta'$\;
}}
\end{algorithm}

In the following subsections, we detail each part of VQAs and show how specific choices of $g$ and $U$ ensure that VQAs optimize $f$. Let us begin with the quantum part. 

\subsection{Quantum part}
\label{sec:qu_part}
The quantum part of VQAs applies a quantum circuit on the $n$-qubit system that constitutes the quantum computer. Importantly, \emph{variational} in VQAs stands for the parametrization of the quantum circuit. Let $d \in \mathbb{N}$ be the number of parameters. 
\begin{definition}
A parametrized quantum circuit is a continuous function $U:\mathbb{R}^d\rightarrow \mathcal{M}_{2^n}(\mathbb{C})$ mapping any $\theta \in \mathbb{R}^d$ to unitary matrix $\U(\theta)$.
\end{definition}
As defined in Subsection~\ref{sec:q_circuit}, a quantum circuit is a sequence of universal quantum gates' compositions and/or tensor products. Thus, all coefficients of matrix $U(\theta)$ are continuous functions on $\mathbb{R}^d$.

\begin{example}
A simple example of a parametrized quantum circuit for $n=3$ and $d=3$ is depicted in Figure~\ref{fig:ex_circuit}.
\begin{figure}[H]
$$
    \Qcircuit @C=1em @R=.7em {
\lstick{\ket{0}} & \gate{R_Y(\theta_1)} & \qw  \\
\lstick{\ket{0}} & \gate{R_Y(\theta_2)}  & \qw \\
\lstick{\ket{0}} & \gate{R_Y(\theta_3)}  & \qw \\
}    
$$
\caption{Quantum circuit parametrized by $(\theta_1, \theta_2, \theta_3) \in \mathbb{R}^3$.}
\label{fig:ex_circuit}
\end{figure}
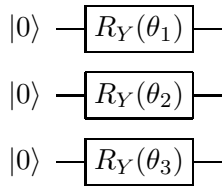
The expression of $U(\theta)$ for this circuit is as follows: $\forall \theta = (\theta_1, \theta_2, \theta_3) \in \mathbb{R}^3$,
\begin{align*}
    U(\theta) &= R_Y(\theta_1)\otimes R_Y(\theta_2)\otimes R_Y(\theta_3)\\
    &= 
    \begin{pmatrix}
    \cos{\frac{\theta_1}{2}} & -\sin{\frac{\theta_1}{2}} \\
    \sin{\frac{\theta_1}{2}} & \cos{\frac{\theta_1}{2}}
\end{pmatrix} \otimes \begin{pmatrix}
    \cos{\frac{\theta_2}{2}} & -\sin{\frac{\theta_2}{2}} \\
    \sin{\frac{\theta_2}{2}} & \cos{\frac{\theta_2}{2}}
\end{pmatrix} \otimes 
\begin{pmatrix}
    \cos{\frac{\theta_3}{2}} & -\sin{\frac{\theta_3}{2}} \\
    \sin{\frac{\theta_3}{2}} & \cos{\frac{\theta_3}{2}}
\end{pmatrix}\\
&= \begin{pmatrix}
    c_1 c_2 c_3 & -c_1 c_2 s_3 & -c_1 s_2 c_3 & c_1 s_2 s_3 & -s_1 c_2 c_3 & s_1 c_2 s_3 & s_1 s_2 c_3 & -s_1 s_2 s_3 \\
    c_1 c_2 s_3 & c_1 c_2 c_3 & -c_1 s_2 s_3 & -c_1 s_2 c_3 & -s_1 c_2 s_3 & -s_1 c_2 c_3 & s_1 s_2 s_3 & s_1 s_2 c_3\\
    c_1 s_2 c_3 & -c_1 s_2 s_3 & c_1 c_2 c_3 & -c_1 c_2 s_3 & -s_1 s_2 c_3 & s_1 s_2 s_3 & -s_1 c_2 c_3 & s_1 c_2 s_3\\
    c_1 s_2 s_3 & c_1 s_2 c_3 & c_1 c_2 s_3 & c_1 c_2 c_3 & -s_1 s_2 s_3 & -s_1 s_2 c_3 & -s_1 c_2 s_3 & -s_1 c_2 c_3 \\
    s_1 c_2 c_3 & -s_1 c_2 s_3 & -s_1 s_2 c_3 & s_1 s_2 s_3 & c_1 c_2 c_3 & -c_1 c_2 s_3 & -c_1 s_2 c_3 & c_1 s_2 s_3 \\
    s_1 c_2 s_3 & s_1 c_2 c_3 & -s_1 s_2 s_3 & -s_1 s_2 c_3 & c_1 c_2 s_3 & c_1 c_2 c_3 & -c_1 s_2 s_3 & -c_1 s_2 c_3\\
    s_1 s_2 c_3 & -s_1 s_2 s_3 & s_1 c_2 c_3 & -s_1 c_2 s_3 & c_1 s_2 c_3 & -c_1 s_2 s_3 & c_1 c_2 c_3 & -c_1 c_2 s_3\\
    s_1 s_2 s_3 & s_1 s_2 c_3 & s_1 c_2 s_3 & s_1 c_2 c_3 & c_1 s_2 s_3 & c_1 s_2 c_3 & c_1 c_2 s_3 & c_1 c_2 c_3
\end{pmatrix}\,,
\end{align*}
where $c_i = \cos{\frac{\theta_i}{2}}$ and $s_i = \sin{\frac{\theta_i}{2}}$ for $i\in[3]$.
\end{example}
\begin{remark}
\label{rem:cont_relax}
    The use of the generalized circuit to $n$ qubits $\mathlarger\bigotimes_{i=1}^n{R_{Y,i}(\theta_i)}$ amounts to a continuous relaxation of the $\{0,1\}$-problem~\eqref{eq:min_f}, where each decision variable $x_i \in[0,1]$ is represented by a rotation angle $\theta_i\in [0,2\pi]$ as follows:
    \begin{equation*}
        x_i = \left(\cos{\frac{\theta_i}{2}}\right)^2\,.
    \end{equation*}
\end{remark}

\subsection{Classical part}
\label{sec:class_part}
The classical part of VQAs consists of a classical optimization over the parameters $\theta \in \mathbb{R}^d$. The classical optimizer essentially aims at finding the optimal parameters $\theta^*$ that lead to optimal solutions of the initial problem~\eqref{eq:min_f} with high probability, specifically, such that 
\begin{equation}
\label{eq:VQAoptimal}
    \sum_{s\in \F}|\bra{s}U(\theta^*)\ket{0_n}|^2 \geq 1-\epsilon\,,
\end{equation}
for small $\epsilon>0$. 
Henceforth, we use notation $\ket{x}$ instead of $\ket{i}$ to efficiently recall that we deal with solutions of optimization problems.

The classical part is characterized by two aspects: the function that guides the optimization and the optimizer itself.

\subsubsection{Guiding function}
\label{subsec:guid_function}
Let $g : \mathbb{R}^d \to \mathbb{R}$ be the guiding function, formally defined in Definition~\ref{def:guiding_function} below, that the classical optimizer minimizes. The function $g$ acts as a link between the quantum and classical parts. For a given $\theta \in \mathbb{R}^d$, we evaluate $U(\theta)\ket{0_n}$ \emph{according to $f$} as we will exemplify below. 
Notice that $f$ and $g$ are distinct since $g$ is defined on $\mathbb{R}^d$ and outputs a quality measure of an $n$-qubit quantum state whereas $f$ is defined on $\{0,1\}^n$. 
Let us denote $\Fq = \left\{\mathlarger\sum_{s\in\F} \psi_{s}\ket{s} : \mathlarger\sum_{s\in\F} |\psi_{s}|^2 = 1\right\}$ as the set of quantum states that are superpositions of optimal solutions of problem~\eqref{eq:min_f}.
Naturally, we would like to define $g$ such that minimizing $g$ tends to minimize $f$.
\begin{definition}[Guiding function]
\label{def:guiding_function}
Let $g: \mathbb{R}^d \to \mathbb{R}$ be a function and $\G$ be its set of minimizers. We call $g$ a guiding function for $f$ with respect to $U$ if $g$ is continuous and 
\begin{equation}
\label{eq:incl_guiding}
    \{U(\theta)\ket{0_n} : \theta \in \G\} \subseteq \Fq\,.
\end{equation}
\end{definition}
In other words, optima of a guiding function $g$ must lead to optima of $f$ or superpositions of optima of $f$. Indeed, Equation~\eqref{eq:incl_guiding} implies that measuring the quantum state $U(\theta^*)\ketzeron$ for $\theta^*\in \G$ outputs with probability 1 an optimal solution of the initial problem $s\in\F$.
Thus, minimizing $g$ amounts to minimizing $f$, and finding $\F$ is done by finding $\G$. The latter is found with the classical optimizer.
Without any information on $\F$, we need to choose a quantum circuit $U$ such that any optimal solution $s\in\F$ is reachable, specifically,
\begin{equation}
    \label{eq:incl_U}
    \{U(\theta)\ket{0_n} : \theta\in\mathbb{R}^d\} \supseteq \CB\,.
\end{equation}
Notice that this condition is weak and easily satisfied. For instance, the circuit depicted in Figure~\ref{fig:ex_circuit} satisfies this condition.
If ever one is interested in finding all optimal solutions of problem~\eqref{eq:min_f}, the circuit and the guiding function should satisfy instead the stronger condition
\begin{equation}
    \label{eq:optim_incl}
    \{U(\theta)\ket{0_n} : \theta \in \G\} = \Fq\,.
\end{equation}
In that case, $U$ satisfying~\eqref{eq:incl_U} is not enough. Without any information on $\F$, we need to choose $U$ that can reach any $n$-qubit quantum states, specifically, 
\begin{equation*}
    \{U(\theta)\ket{0_n} : \theta\in\mathbb{R}^d\} = \left\{\mathlarger\sum_{x\in\{0,1\}^n} \psi_{x}\ket{x} : \mathlarger\sum_{x\in\{0,1\}^n} |\psi_{x}|^2 = 1\right\}\,.
\end{equation*}

A popular choice for the guiding function in the literature is the mean function
\begin{equation}
\label{def:g_mean}
    g_{\text{mean}}(\theta) = \sum_{x\in\{0,1\}^n} p_\theta(x)f(x)\,,
\end{equation}
where $p_\theta(x) = |\bra{x}U(\theta)\ket{0_n}|^2$ is the probability of finding $x$ when $U(\theta)\ket{0_n}$ is measured. We show next that $g_{\text{mean}}$ is indeed a guiding function according to Definition~\ref{def:guiding_function}, see Appendix~\ref{proof:g_mean_guinding} for a proof.

\begin{restatable}{proposition}{gmeanguinding}\label{prop:g_mean_guinding}
Function $g_{\text{mean}}$ is a guiding function. 
\end{restatable}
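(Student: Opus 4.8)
The plan is to reduce everything to the elementary observation that $g_{\text{mean}}(\theta)$ is an average of the values $f(x)$ weighted by the measurement probabilities $p_\theta(x)$, and that such an average equals the optimal value $f^\star := \min_x f(x)$ exactly when all the probability mass sits on optimal solutions. I would write $g_{\text{mean}}$ as the finite sum $\sum_{x\in\{0,1\}^n} p_\theta(x) f(x)$ and first establish continuity: since $U$ is a parametrized quantum circuit, each entry of $U(\theta)$ is a continuous function of $\theta$, hence so is each amplitude $\bra{x}U(\theta)\ket{0_n}$, and $p_\theta(x) = |\bra{x}U(\theta)\ket{0_n}|^2$ is continuous as the squared modulus of a continuous function. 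A finite linear combination with the constant coefficients $f(x)$ is then continuous, so $g_{\text{mean}}$ is continuous.

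Next I would prove the key inequality and identify its equality case. Because $U(\theta)$ is unitary it preserves the normalizing condition, so $\sum_x p_\theta(x) = \|U(\theta)\ket{0_n}\|^2 = 1$. Subtracting $f^\star$ gives
\begin{equation*}
g_{\text{mean}}(\theta) - f^\star = \sum_{x\in\{0,1\}^n} p_\theta(x)\,(f(x) - f^\star) \geq 0,
\end{equation*}
since every $p_\theta(x) \geq 0$ and every $f(x) - f^\star \geq 0$. As a sum of nonnegative terms it vanishes if and only if each term vanishes, i.e.\ $p_\theta(x) = 0$ for every $x$ with $f(x) > f^\star$, equivalently for every $x \notin \F$. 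Thus $g_{\text{mean}}(\theta) = f^\star$ holds precisely when $U(\theta)\ket{0_n}$ is supported on $\F$, that is, precisely when $U(\theta)\ket{0_n} \in \Fq$.

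Finally I would pin down the set of minimizers $\G$. The reachability assumption \eqref{eq:incl_U} guarantees that every basis state is attainable, in particular $\ket{s}$ for some $s \in \F$; at the corresponding parameter $g_{\text{mean}}$ equals $f^\star$. Hence the minimum value of $g_{\text{mean}}$ is exactly $f^\star$, so $\G = \{\theta : g_{\text{mean}}(\theta) = f^\star\}$. Combining this with the equality case above yields $\{U(\theta)\ket{0_n} : \theta \in \G\} \subseteq \Fq$, which is exactly \eqref{eq:incl_guiding}, and therefore $g_{\text{mean}}$ is a guiding function.

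I expect the only delicate point to be this attainment step. The equality characterization shows that $g_{\text{mean}}(\theta) = f^\star$ forces $U(\theta)\ket{0_n} \in \Fq$, but on its own it says nothing about a minimizer at which $g_{\text{mean}}$ exceeds $f^\star$; such a minimizer could a priori produce a state outside $\Fq$. One must therefore rule this out by exhibiting a parameter that achieves the value $f^\star$, which is exactly what the reachability condition \eqref{eq:incl_U} supplies. Without such a hypothesis on $U$ the statement can genuinely fail, as a circuit whose output is a fixed uniform superposition makes $g_{\text{mean}}$ constant with every $\theta$ a minimizer and none of the corresponding states in $\Fq$.
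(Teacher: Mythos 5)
Your proof is correct, and its core is the same computation as the paper's: rewriting $g_{\text{mean}}(\theta)-f^*$ as the nonnegative sum $\sum_{x}p_\theta(x)\left(f(x)-f^*\right)$ with the support characterization of the equality case. The paper packages this as a contradiction --- a minimizer $\theta\in\G$ whose state lies outside $\Fq$ would give $g_{\text{mean}}(\theta)>f^*$ --- while you argue directly; that difference is cosmetic. What is not cosmetic is your handling of the attainment step. The paper's contradiction only bites because ``one readily verifies that the minimum of $g_{\text{mean}}$ is $g_{\text{mean}}^*=f^*$,'' an assertion it leaves unjustified, and you correctly identify this as exactly the point where the reachability condition \eqref{eq:incl_U} must enter: it supplies a parameter $\theta_0$ with $U(\theta_0)\ket{0_n}=\ket{s}$ for some $s\in\F$, hence $g_{\text{mean}}(\theta_0)=f^*$, so the set of minimizers is precisely the level set at $f^*$ and the equality case delivers \eqref{eq:incl_guiding}. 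Your closing counterexample (a circuit with constant output in uniform superposition, for which $g_{\text{mean}}$ is constant, every $\theta$ is a minimizer, and no output state lies in $\Fq$) shows the proposition genuinely fails without some such hypothesis on $U$, so making the dependence on \eqref{eq:incl_U} explicit is a real gain in rigor over the paper's proof, which tacitly relies on the circuit satisfying the condition imposed in Subsection~\ref{subsec:guid_function}.
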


\begin{example}
We illustrate the mean function on the 3-qubit quantum circuit $U(\theta)$ depicted in Figure~\ref{fig:ex_circuit}. The generalization of its computation for $n$ qubits is trivial since it needs to replace 3 by $n$.
The single application of rotation gate $R_Y$~\eqref{def:RY} of angle $\theta_i$ on a qubit initially on state $\ket{0}$ is 
\begin{align*}
    R_Y(\theta_i)\ket{0} &= \cos\frac{\theta_i}{2}\ket{0} + \sin\frac{\theta_i}{2}\ket{1}\\
    &= \sum_{j\in \{0,1\}} \cos \frac{\theta_i - j\pi}{2}\ket{j}\,,
\end{align*}
since $\sin(\phi) = \cos(\phi - \frac{\pi}{2})$ for $\phi \in \mathbb{R}$. 
Eventually, the quantum state resulting from $U(\theta)$ is 
\begin{align*}
    U(\theta)\ket{0_3} &= \bigotimes_{i=1}^3 R_{Y,i}(\theta_i)\ket{0}\\
    &= \sum_{j_1,j_2,j_3\in\{0,1\}}\left( \prod_{i=1}^3 \cos\frac{\theta_i - j_i\pi}{2}\right)\ket{j_1j_2j_3}\,.
\end{align*}
Thus, the probability to measure $x = (x_1,x_2,x_3) \in\{0,1\}^3$ is 
\begin{equation}
\label{eq:ex_prob}
    p_\theta(x) = \left(\prod_{i=1}^3 \cos\frac{\theta_i - x_i\pi}{2}\right)^2\,,
\end{equation}
and the expression of $g_{\text{mean}}$ of equation~\eqref{def:g_mean} directly results from it. 
\end{example}

Other functions are compatible with Definition~\ref{def:guiding_function}. One of these functions encountered in the literature is the Gibbs function~\cite{Gibbs} which stems from statistical mechanics. Let $\eta>0$ be a parameter to be set. The Gibbs function is defined as
\begin{equation}
    \label{def:g_gibbs}
    g_{G,\eta}(\theta) = -\ln\left({\sum_{x\in\{0,1\}^n}p_\theta(x)e^{-\eta f(x)}} \right)\,.
\end{equation}
The choice of this function is motivated by the exponential shape that highly rewards the increase of probabilities of low-cost states.
Notice that for small $\eta$, minimizing the Gibbs function is essentially equivalent to minimizing the mean function in the sense that the Taylor series of $g_{G,\eta}$ at first order in $\eta = 0$  gives $g_{G,\eta} = \eta g_\text{mean}$. We show next that $g_{G,\eta}$ is indeed a guiding function according to Definition~\ref{def:guiding_function}, see Appendix~\ref{proof:g_gibbs_guiding} for a proof.

\begin{restatable}{proposition}{ggibbsguiding}\label{prop:g_gibbs_guiding}
Let $\eta>0$. Function $g_{G,\eta}$ is a guiding function. 
\end{restatable}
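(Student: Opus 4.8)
The plan is to mirror the proof of Proposition~\ref{prop:g_mean_guinding} for the mean function, exploiting the single structural fact that separates the Gibbs function from it: the outer $-\ln$. Since $t \mapsto -\ln t$ is strictly decreasing on $(0,\infty)$, minimizing $g_{G,\eta}$ is \emph{equivalent} to maximizing the inner quantity $h(\theta) := \sum_{x\in\{0,1\}^n} p_\theta(x)\,e^{-\eta f(x)}$, so $\G$ coincides with the set of maximizers of $h$. This reduces the whole problem to a ``mean-like'' analysis with the transformed objective $\tilde f(x) = e^{-\eta f(x)}$, where I must also check the two defining requirements of Definition~\ref{def:guiding_function}: continuity of $g_{G,\eta}$, and the inclusion~\eqref{eq:incl_guiding}.

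For continuity, I would first note that $U$ is continuous and hence each $p_\theta(x) = |\bra{x}U(\theta)\ket{0_n}|^2$ is a continuous function of $\theta$, so $h$ is continuous. The key point is that the argument of the logarithm never vanishes: writing $f_{\max} = \max_{x} f(x)$, one has $e^{-\eta f(x)} \ge e^{-\eta f_{\max}} > 0$, whence $h(\theta) \ge e^{-\eta f_{\max}} > 0$ for all $\theta$ (using $\sum_x p_\theta(x) = 1$). Thus $g_{G,\eta} = -\ln h$ is a composition of continuous maps into $(0,\infty)$ with $\ln$, and is therefore continuous.

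For the minimizer characterization, let $f^* = \min_x f(x)$. Because $\eta>0$ and $u\mapsto e^{-\eta u}$ is strictly decreasing, $e^{-\eta f(x)} \le e^{-\eta f^*}$ with equality \emph{iff} $x\in\F$. Averaging against $p_\theta$ and again using $\sum_x p_\theta(x)=1$ yields $h(\theta)\le e^{-\eta f^*}$, i.e.\ $g_{G,\eta}(\theta)\ge \eta f^*$, and equality holds precisely when $p_\theta(x)=0$ for every $x\notin\F$. Invoking the standing reachability assumption~\eqref{eq:incl_U}, some $\theta$ sends $\ket{0_n}$ to a basis state $\ket{s}$ with $s\in\F$, so the bound $\eta f^*$ is attained and is the global minimum of $g_{G,\eta}$. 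Consequently $\theta\in\G$ iff $p_\theta$ is supported on $\F$, meaning $U(\theta)\ket{0_n}=\sum_{s\in\F}\psi_s\ket{s}$; the normalization $\sum_{s\in\F}|\psi_s|^2=1$ is automatic since $U(\theta)$ is unitary. Hence $U(\theta)\ket{0_n}\in\Fq$, which establishes~\eqref{eq:incl_guiding} (indeed the stronger equality~\eqref{eq:optim_incl}).

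The main obstacle, and the one step deserving care, is guaranteeing that the lower bound $\eta f^*$ is \emph{achieved}, so that genuine minimizers carry value exactly $\eta f^*$ rather than something strictly larger; this is exactly where the reachability of an optimal basis state (condition~\eqref{eq:incl_U}) enters, and without it the inclusion could degenerate. Everything else is a monotone reparametrization of the mean-function argument and introduces no new analytic difficulty; in particular I would avoid leaning on the first-order equivalence $g_{G,\eta}\approx \eta\,g_{\text{mean}}$ noted after~\eqref{def:g_gibbs}, since that only controls small $\eta$, whereas the equality characterization above is valid for every $\eta>0$.
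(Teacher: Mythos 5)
Your proof is correct, but it takes a genuinely different route from the paper's. The paper argues by contradiction: it takes $\theta\in\G$, assumes $U(\theta)\ket{0_n}\notin\Fq$, splits the sum inside the logarithm over $\F$ and its complement, and uses $e^{-\eta f(x)}<e^{-\eta f^*}$ for $x\notin\F$ together with the decreasing monotonicity of $-\ln$ to derive $g_{G,\eta}(\theta)>\eta f^*$, contradicting minimality (with the claim that the minimum equals $\eta f^*$ left as ``easily verified''). You instead characterize the minimizers directly: strict monotonicity of $-\ln$ converts the problem into maximizing $h(\theta)=\sum_x p_\theta(x)e^{-\eta f(x)}$, the bound $h(\theta)\le e^{-\eta f^*}$ holds with equality exactly when $p_\theta$ is supported on $\F$, and reachability~\eqref{eq:incl_U} guarantees the bound is attained. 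Your version buys two things the paper glosses over: an explicit check that the argument of the logarithm is bounded below by $e^{-\eta f_{\max}}>0$, so $g_{G,\eta}$ is well defined and continuous everywhere; and an explicit identification of where the assumption on $U$ enters --- the paper's ``the minimum of $g_{G,\eta}$ is $\eta f^*$'' silently requires exactly the attainment argument you spell out, since without it $\G$ could consist of parameters with value strictly above $\eta f^*$ whose images lie outside $\Fq$.

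One genuine error to fix: your parenthetical claim that the argument establishes the stronger equality~\eqref{eq:optim_incl} is unjustified. You prove $\theta\in\G$ if and only if $p_\theta$ is supported on $\F$, which gives the inclusion $\{U(\theta)\ket{0_n}:\theta\in\G\}\subseteq\Fq$; but equality would additionally require every superposition $\sum_{s\in\F}\psi_s\ket{s}$ to lie in the image of $\theta\mapsto U(\theta)\ket{0_n}$, and condition~\eqref{eq:incl_U} only guarantees that \emph{basis} states are reachable. The paper makes precisely this point after Definition~\ref{def:guiding_function}: for~\eqref{eq:optim_incl}, ``$U$ satisfying~\eqref{eq:incl_U} is not enough,'' and one needs a circuit reaching all $n$-qubit states. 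Delete that parenthetical (or add the stronger surjectivity hypothesis) and the proof stands.
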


One might suggest other guiding functions, such as the minimum function 
\begin{equation}
\label{def:g_min}
    g_{\text{min}}(\theta)=\underset{x \hspace{0.5mm}:\hspace{0.5mm} p_\theta(x) > 0}{\min} f(x)\,.
\end{equation}
However, $g_{\text{min}}$ does not verify~\eqref{eq:incl_guiding}, and is not even continuous, hardening its optimization and excluding its choice for the guiding function. 
\begin{example}
\label{ex:g_min}
Let us illustrate that $g_{\text{min}}$ is not a guiding function. 
For that, we consider the circuit of Figure~\ref{fig:ex_circuit} and the following function $f: \{0,1\}^3 \mapsto \mathbb{R}$ to minimize, 
\begin{equation*}
    \begin{cases}
   f(0,0,0) &= 1 \\
   f(x) &= 0\,,~~\forall x \neq (0,0,0)
\end{cases}
\end{equation*}
where $f^* = 0$ is the optimal value.
Function $g_{\text{min}}$ reaches its optimal value $g_{\text{min}}^* = 0$ on the set of its optimizers $$\G =  \mathbb{R}^3\setminus\{(2k\pi, 2k\pi, 2k\pi) : k\in\mathbb{Z}\}\,.$$ However, $$\forall \theta \in \G \setminus\{((2k+1)\pi, (2k+1)\pi, (2k+1)\pi) : k\in\mathbb{Z}\}\,,U(\theta)\ket{0_n} \notin \Fq\,,$$ because there is a non-zero probability of sampling $(0,0,0)$. Thus, $g_{\text{min}}$ violates~\eqref{eq:incl_guiding}.
Moreover, $g_{\text{min}}$ is not continuous. Indeed, $g_{\text{min}}(0,0,0) = 1$, whereas $\forall \epsilon >0\,, g_{\text{min}}(\epsilon,0,0) = 0$. 
\end{example}
The above observation motivated~\cite{CVaR} to suggest another function, the CVaR (Conditional Value-at-Risk) function. 
The CVaR function is the average on the lower $\alpha$-tail of values of $f$ encountered, where $\alpha\in]0,1]$ is a parameter to be set. 
Let $(x_1,\ldots,x_{2^n})$ be the $n$-bitstrings sorted in non decreasing order, namely $f(x_i) \leq f(x_{i+1})$ for any $i\in[2^n - 1]$. Let $N_\alpha$ be the index that delimits the $\alpha$-tail elements of the distribution, specifically,
\begin{equation*}
    N_\alpha = \min \left\{N \geq 1 : \sum_{i=1}^{N} p_\theta(x_i) \geq \alpha\right\}\,.
\end{equation*}
Then, the CVaR function is
\begin{equation}
\label{def:g_cvar}
    g_{C,\alpha}(\theta) =  \frac{1}{\sum_{i=1}^{N_\alpha}p_\theta(x_i)}\sum_{i=1}^{N_\alpha} p_\theta(x_i)f(x_i)\,.
\end{equation}
The special case $\alpha = 1$ implies $g_\alpha = g_{\text{mean}}$, whereas when $\alpha$ approaches zero, we find $g_{\text{min}}$.

The CVaR function is an alternative to the non-smooth minimum function. 
While CVaR does not verify~\eqref{eq:incl_guiding} either, it keeps continuity and still focuses on the best solutions that appear on the probability distribution. 
\begin{example}
Let us illustrate the violation of~\eqref{eq:incl_guiding} by the \textup{CVaR} function, for any $\alpha\in]0,1[$. 
For that, we consider function $f$ of Example~\ref{ex:g_min} with the same quantum circuit of Figure~\ref{fig:ex_circuit}.
Let $\alpha\in ]0,1[$. Let us find $\theta\in\G$ such that $U(\theta)\ket{0_n}\notin\Fq$. In other words, we search $\theta\in\mathbb{R}^3$ such that $$g_{C,\alpha}(\theta) = 0\,\text{ and }\,p_\theta(0,0,0) > 0\,.$$
Let us look at $\theta = (\pi - \epsilon, 0, 0)$, where $\epsilon \in ]0,\pi[$.
According to~\eqref{eq:ex_prob}, we have $$p_\theta(0,0,0) = \cos^2\left(\frac{\pi - \epsilon}{2}\right)>0\,.$$
It remains to choose $\epsilon$ to ensure $g_{C,\alpha}(\theta) = 0$. Namely, we want $\epsilon$ such that 
\begin{equation*}
    \sum_{x\neq (0,0,0)}p_\theta(x) \geq \alpha\,,
\end{equation*}
meaning
\begin{equation*}
    1 - \cos^2\left(\frac{\pi - \epsilon}{2}\right) \geq \alpha\,.
\end{equation*}
This holds for any $\epsilon \leq \pi - 2\arccos{(\sqrt{1-\alpha})}$.
\end{example}
Even if CVaR does not verify~\eqref{eq:incl_guiding}, it can be seen as a pseudo-guiding function, defined just below. Notice that in practice, using the CVaR function seems appropriate because it accepts a probability of measuring optimal solutions after optimizing to be lower than one, such as expressed in~\eqref{eq:VQAoptimal}.

\begin{definition}[Pseudo-guiding function]
\label{def:weak_gu_func}
Let $g: \mathbb{R}^d \to \mathbb{R}$ be a function and $\G$ be its set of minimizers. We call $g$ a pseudo-guiding function for $f$ with respect to $U$ if $g$ is continuous and if there exists $\alpha\in]0,1[$ such that optima of $g$ can lead to non-optimal solutions of $f$ with a probability strictly lower than $1-\alpha$. 
Specifically, let $\theta\in\G$. Thus, either $U(\theta)\ket{0_n}\in\Fq$ or $\mathlarger\sum_{x\notin \F}|\bra{x}U(\theta)\ket{0_n}|^2 < 1-\alpha$.
\end{definition}

We show next that $g_{C,\alpha}$ is indeed a pseudo-guiding function according to Definition~\ref{def:weak_gu_func}, see Appendix~\ref{proof:g_CVaR} for a proof.

\begin{restatable}{proposition}{gCVaR}\label{prop:g_CVaR}
Let $\alpha \in ]0,1[$. Function $g_{C,\alpha}$ is a pseudo-guiding function.
\end{restatable}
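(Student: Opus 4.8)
The plan is to establish the two requirements of Definition~\ref{def:weak_gu_func} separately: continuity, and the probabilistic condition on the set of minimizers $\G$. Continuity of $g_{C,\alpha}$ has already been asserted in the text above, and in any case follows from the continuity of $\theta\mapsto p_\theta(x)=|\bra{x}U(\theta)\ket{0_n}|^2$ (a consequence of $U$ being continuous) combined with the continuity of the CVaR functional; so I would concentrate on the second requirement. The starting observation is that $g_{C,\alpha}(\theta)$ is always a convex combination of values $f(x_i)$ that are all at least $f^*:=\min_{x}f(x)$ (the denominator $\sum_{i\le N_\alpha}p_\theta(x_i)\ge\alpha>0$ makes it well defined), so $g_{C,\alpha}(\theta)\ge f^*$ for every $\theta$, with equality attained because~\eqref{eq:incl_U} makes some optimal basis state $\ket{s}$ reachable (for which the $\alpha$-tail reduces to $\{s\}$ and $g_{C,\alpha}=f^*$). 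Hence the minimum value is $f^*$ and $\G=\{\theta:g_{C,\alpha}(\theta)=f^*\}$.

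The heart of the argument is a clean description of $\G$ in terms of the mass $S(\theta):=\sum_{x\in\F}p_\theta(x)$ that $U(\theta)\ket{0_n}$ places on optimal solutions. Writing the bitstrings in the fixed non-decreasing order $x_1,\ldots,x_{2^n}$ of $f$, the optimal solutions occupy the first $|\F|$ positions. I would then prove $\theta\in\G\iff S(\theta)\ge\alpha$. For the reverse implication, if $S(\theta)\ge\alpha$ the cumulative probability reaches $\alpha$ no later than index $|\F|$, so $N_\alpha\le|\F|$, the whole $\alpha$-tail lies inside the optimal block, every $f(x_i)=f^*$, and $g_{C,\alpha}(\theta)=f^*$. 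For the direct implication I argue by contraposition: if $S(\theta)<\alpha$ then $N_\alpha\ge|\F|+1$, so $x_{N_\alpha}$ is non-optimal, and the minimality of $N_\alpha$ forces $p_\theta(x_{N_\alpha})\ge\alpha-\sum_{i<N_\alpha}p_\theta(x_i)>0$; thus a bitstring with $f>f^*$ enters the tail with strictly positive weight and $g_{C,\alpha}(\theta)>f^*$, i.e. $\theta\notin\G$.

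With this characterization the conclusion follows quickly. For any $\theta\in\G$ we have $S(\theta)\ge\alpha$, hence $\sum_{x\notin\F}p_\theta(x)=1-S(\theta)\le 1-\alpha$. If $S(\theta)=1$ then $U(\theta)\ket{0_n}\in\Fq$, the first alternative of Definition~\ref{def:weak_gu_func}. Otherwise I invoke the existential quantifier in that definition: fixing any threshold $\alpha'\in\,]0,\alpha[$ (for instance $\alpha'=\alpha/2$), we get $\sum_{x\notin\F}p_\theta(x)\le 1-\alpha<1-\alpha'$, which is the strict second alternative. Thus $g_{C,\alpha}$ is a pseudo-guiding function, with pseudo-guiding parameter $\alpha'$.

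The main obstacle, and the only genuinely delicate point, is the equality case $S(\theta)=\alpha$: a minimizer may place mass exactly $\alpha$ on $\F$ and the remaining $1-\alpha$ on non-optimal solutions, so running the argument with the CVaR parameter itself ($\alpha'=\alpha$) yields only the non-strict bound $\sum_{x\notin\F}p_\theta(x)\le1-\alpha$ and fails the required strictness. Recognizing that Definition~\ref{def:weak_gu_func} only demands the existence of some threshold, and therefore choosing $\alpha'$ strictly below $\alpha$, is exactly what upgrades the non-strict bound to the strict one. The remaining care is routine bookkeeping around the minimality of $N_\alpha$ and the tie-handling when several bitstrings share the value $f^*$.
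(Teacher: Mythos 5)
Your proof is correct and follows the same skeleton as the paper's: continuity inherited from the $g_{\text{mean}}$ argument, then a case split according to whether the $\alpha$-tail stays inside the optimal block, which in your notation is the dichotomy $S(\theta)\geq\alpha$ versus $S(\theta)<\alpha$ and in the paper's is $N_\alpha\leq N_\F$ versus $N_\alpha>N_\F$; your contrapositive step in the second case (a non-optimal bitstring enters the tail with weight at least $\alpha-\sum_{i<N_\alpha}p_\theta(x_i)>0$, forcing $g_{C,\alpha}(\theta)>f^*$) is the same reuse of the $g_{\text{mean}}$ computation that the paper makes. The one place where you genuinely diverge is the strictness bookkeeping, and there your version is the more careful one: the paper asserts that in the first case the probability of non-optimal samples is \emph{strictly} lower than $1-\alpha$, but the inequality it displays, $\sum_{i=1}^{N_\F}|\psi_{x_i}|^2\geq\sum_{i=1}^{N_\alpha}|\psi_{x_i}|^2\geq\alpha$, only yields the non-strict bound $\sum_{x\notin\F}p_\theta(x)\leq 1-\alpha$, and, as you observe, a minimizer with $S(\theta)=\alpha$ exactly saturates it. Your move of exploiting the existential quantifier in Definition~\ref{def:weak_gu_func} and certifying the property with a threshold $\alpha'\in\,]0,\alpha[$ (e.g.\ $\alpha'=\alpha/2$) closes that equality case cleanly, whereas the paper's proof as written claims strictness with the CVaR parameter itself without establishing it. Two minor observations: your biconditional $\theta\in\G\iff S(\theta)\geq\alpha$ shows in passing that the paper's second case is vacuous for minimizers (its conclusion $U(\theta)\ket{0_n}\in\Fq$ would force $N_\alpha\leq N_\F$, contradicting the case hypothesis), and your appeal to~\eqref{eq:incl_U} to guarantee that the minimum value $f^*$ is attained makes explicit an assumption the paper leaves implicit both here and in Proposition~\ref{prop:g_mean_guinding}.
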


\subsubsection{Classical optimizer}
The role of the classical optimizer is to minimize the guiding function. The function $g$ is continuous and is usually differentiable but not convex. Any unconstrained optimization algorithm can be used to minimize $g$, such as local search, descent gradient method, or any black-box optimization algorithm.

To speak in terms of stochastic optimization,
the classical optimizer aims at solving the stochastic programming model under endogenous uncertainty
\begin{equation}
\label{def:min_g}
    \min_\theta \{g(\theta) = \mathbb{E}[ G(\theta, \xi_\theta)]\}\,,
\end{equation}
where the definition of $G$ depends on the choice of a specific guiding function, and $\xi_\theta$ is an endogenous vector that depends on $\theta$. Specifically, $\xi_\theta$ is a discrete random variable, with the set of possible outcomes $\{0,1\}^n$ and the following distribution probability:
\begin{equation*}
    \mathbb{P}(\xi_\theta = x) = p_\theta(x),~~\forall x\in \{0,1\}^n\,.
\end{equation*}

Thus, this problem falls into the class of stochastic dependent-decision probabilities problems~\cite{Optim_stocha_dep}. In practice, the classical optimizer approximates the function by a Monte Carlo estimation as follows:
\begin{equation*}
    \hat{g}_N(\theta) = \frac{1}{N}\sum_{j=1}^{N} G(\theta, \xi_{\theta}^j)\,,
\end{equation*}
where $\{\xi^j_\theta\}_{j\in[N]}$ is a sample of size $N$ from the distribution of $\xi_\theta$. Notice that for a given $\theta$, the quantity $\hat{g}_N(\theta)$ itself is a random variable since its value depends on the sample that has been generated, which is random. In contrast, the value of $g(\theta)$ is deterministic.
 In practice, the classical optimizer iterates the loop that consists of, given a sampling distribution of size $N$ of the quantum state $U(\theta)\ket{0_n}$ (Remark~\ref{rem:sampling}), outputs a $\theta'$. This $\theta'$ is then transmitted to the quantum part.  Eventually, the aim is to output $\theta^*$ such that $U(\theta)\ket{0_n} \subseteq \Fq$. 
In each iteration, the value of $\hat{g}_N(\theta)$ is computed. Notice that according to the Law of Large Numbers~\cite{Shapiro}, $\hat{g}_N(\theta)$ converges with probability one to $g(\theta)$ as $N\rightarrow \infty$.

For instance, for the case of $g=g_\text{mean}$, we have
\begin{equation*}
    G(\theta, \xi_\theta) = f(\xi_\theta)\,.
\end{equation*}
Hence, for each $\xi_\theta^j\in\{0,1\}^n$ sampled, either we compute classically $f(\xi_\theta^j)$ and store it if not already computed, or we get its value. Thus, we can compute $\hat{g}_N(\theta)$. Notice that for the CVaR function, we compute the empirical mean only on the best $\lceil \alpha N \rceil$ values $f(\xi_\theta)$ found by sampling.

The solution returned by the VQA is the minimum value of $f$ and the associated minimizer $x$ encountered while the algorithm runs. 

\section{Quantum Approximate Optimization Algorithm}
\label{sect:QAOA}
We assume throughout this section that the function $f$ to be minimized is polynomial.
First, we reformulate problem~\eqref{eq:min_f} to a more suitable form for quantum optimization. This reformulation is motivated by the quantum adiabatic evolution~\cite{AdiabEvol} that, for a given Hermitian matrix\footnote{A complex square matrix is Hermitian if it is equal to its conjugate transpose.}, approximates the eigenvector with the lowest eigenvalue under certain conditions. For that, we interpret the objective function of problem~\eqref{eq:min_f} as an Hermitian matrix $H_f$ such that each eigenvector $\ket{u_x}$ is matching a classical solution $x\in\{0,1\}^n$ with an eigenvalue equal to $f(x)$, specifically,
\begin{equation*}
    H_f\ket{u_x} = f(x)\ket{u_x}\,.
\end{equation*}
Thus, the solutions of problem~\eqref{eq:min_f} are the solutions corresponding to the lowest eigenvalues of $H_f$. The Quantum Approximate Optimization Algorithm (QAOA)~\cite{QAOAFarhi} presented in this section aims at finding the lowest eigenvalue of $H_f$.

\subsection{Problem reformulation}
The construction of $H_f$ is as follows. First, we transform the $\{0,1\}$-problem~\eqref{eq:min_f} into a $\{-1,1\}$-problem. For that, we apply the following linear transformation: for $x=(x_1,\ldots,x_n)\in\{0,1\}^n$, we define $z=(z_1,\ldots,z_n)\in\{-1,1\}^n$ where 

\begin{equation}
\label{eq:transfo}
    z_i = 1-2x_i\,, \forall i \in [n]\,.
\end{equation}
This leads to the problem
\begin{equation*}
    \underset{z \in \{-1,1\}^n}{\min} f_{\pm}(z)\,,
\end{equation*}
where, for $z\in\{-1,1\}^n$,
\begin{equation*}
    f_{\pm}(z) = \mathlarger\sum_{\alpha = (\alpha_1,...\alpha_n) \in \{0,1\}^n} h_{\alpha}\prod_{i=1}^n z_i^{\alpha_i}\,,
\end{equation*}
where $h_{\alpha} \in \mathbb{R}, \forall \alpha \in \{0,1\}^n$.
Second, we define $H_f$ as
\begin{equation*}
    H_f = \mathlarger\sum_{\alpha = (\alpha_1,...\alpha_n) \in \{0,1\}^n} h_{\alpha}\bigotimes_{i=1}^n Z_i^{\alpha_i}\,,
\end{equation*}
where $Z = \begin{pmatrix}1&0\\0&-1\end{pmatrix}$, $Z^0 = I$ and $Z^1 = Z$. We note $Z_i$ the application of $Z$ to qubit $i$. Notice that $Z$ is equal to the universal gate $R_{Z}(\pi)$ modulo a global phase (see~\eqref{def:RZ}). This construction of $H_f$ leads to the following property. 
\begin{proposition}
The eigenvectors of $H_f$ are the canonical basis $\ket{x}\in\CB$ with eigenvalues that are the cost of the solutions $f(x)$, specifically, 
\begin{equation}
\label{eq:eigenvectors}
    \forall \ket{x}\in\CB,\hspace{2mm} H_f\ket{x} = f(x)\ket{x}\,.
\end{equation}
\end{proposition}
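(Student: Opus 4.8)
The plan is to exploit the fact that $Z$ is diagonal in the computational basis and that every $\ket{x}\in\CB$ is a tensor product of single-qubit basis states, so that each summand $\bigotimes_{i=1}^n Z_i^{\alpha_i}$ of $H_f$ acts on $\ket{x}$ purely as multiplication by a scalar.

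First I would establish the single-qubit identity $Z\ket{b} = (1-2b)\ket{b}$ for $b\in\{0,1\}$, which is immediate from the diagonal form of $Z$ (giving $Z\ket{0}=\ket{0}$ and $Z\ket{1}=-\ket{1}$). Together with the convention $Z^0=I$, $Z^1=Z$, this yields $Z^{\alpha_i}\ket{x_i} = z_i^{\alpha_i}\ket{x_i}$ for each $\alpha_i\in\{0,1\}$, where $z_i = 1-2x_i$ as in~\eqref{eq:transfo}. Next I would write $\ket{x}=\bigotimes_{i=1}^n\ket{x_i}$ and apply $\bigotimes_{i=1}^n Z_i^{\alpha_i}$ factor by factor, using that a tensor product of operators acts componentwise on a tensor product of states, to obtain
\[
\left(\bigotimes_{i=1}^n Z_i^{\alpha_i}\right)\ket{x} = \bigotimes_{i=1}^n z_i^{\alpha_i}\ket{x_i} = \left(\prod_{i=1}^n z_i^{\alpha_i}\right)\ket{x}\,.
\]

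Summing over $\alpha$ with coefficients $h_\alpha$ then gives
\[
H_f\ket{x} = \left(\sum_{\alpha\in\{0,1\}^n} h_\alpha\prod_{i=1}^n z_i^{\alpha_i}\right)\ket{x} = f_{\pm 1}(z)\,\ket{x}\,,
\]
and since by construction of the reformulation $f_{\pm 1}(z)=f(x)$ under the substitution $z_i=1-2x_i$, we conclude $H_f\ket{x}=f(x)\ket{x}$, which is exactly~\eqref{eq:eigenvectors}.

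Finally, to justify that these are \emph{precisely} the eigenvectors of $H_f$ and not merely some of them, I would invoke that $\CB$ is an orthonormal basis of $\C^{2^n}$ consisting of $2^n$ vectors; hence $\{\ket{x} : x\in\{0,1\}^n\}$ is a complete eigenbasis and $H_f$ is diagonal in $\CB$ with diagonal entries $f(x)$. There is no genuine obstacle here, as the argument is a direct computation; the only point requiring care is the bookkeeping of the substitution that guarantees $f_{\pm 1}(z)=f(x)$, which holds by the very definition of $f_{\pm 1}$ as the $\{-1,1\}$-reformulation of $f$.
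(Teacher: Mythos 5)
Your proposal is correct and follows essentially the same route as the paper's own proof: both rest on the single-qubit identity $Z\ket{x_i}=z_i\ket{x_i}$ with $z_i=1-2x_i$, the componentwise action of $\bigotimes_{i=1}^n Z_i^{\alpha_i}$ on $\ket{x}$, and the observation that $H_f$ is diagonal in the canonical basis so that $\CB$ is a complete eigenbasis. Your explicit remark that the $2^n$ basis states exhaust the eigenvectors makes slightly more careful what the paper handles by simply noting each summand is diagonal, but the argument is the same direct computation ending in $H_f\ket{x}=f_{\pm 1}(z)\ket{x}=f(x)\ket{x}$.
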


\begin{proof}
First, the eigenvectors of $H_f$ are the canonical basis states. Indeed, each term of the sum that constitutes $H_f$ is a tensor product of $n$ matrices $I$ or $Z$, both diagonal. Thus, $H_f$ is a $2^n$ diagonal matrix.
Second, let us find the eigenvalues associated with the eigenvectors. Let $\ket{x} = \ket{x_1 \ldots x_n}$ be in $\CB$. Let $z=(z_1,\ldots,z_n)$ be the result of transformation~\eqref{eq:transfo}. Thus, we can easily show that $Z\ket{x_i} = z_i\ket{x_i}\,,\forall i\in[n]$ and
\begin{align*}
    H_f\ket{x} &= \mathlarger\sum_{\alpha = (\alpha_1,\ldots,\alpha_n) \in \{0,1\}^n} h_{\alpha}\bigotimes_{i=1}^n Z_i^{\alpha_i}\ket{x_i}\\
    &= \mathlarger\sum_{\alpha = (\alpha_1,\ldots,\alpha_n) \in \{0,1\}^n} h_{\alpha}\bigotimes_{i=1}^n z_i^{\alpha_i}\ket{x_i}\\
    &= \left(\mathlarger\sum_{\alpha = (\alpha_1,\ldots,\alpha_n) \in \{0,1\}^n} h_{\alpha}\prod_{i=1}^nz_i^{\alpha_i}\right)\ket{x}\\
    &= f_\pm(z)\ket{x}\\
    &= f(x)\ket{x}\,.
\end{align*}
\end{proof}

\begin{example}
\label{ex:prob_example}
We illustrate this transformation on a small example with $n=2$. Let us consider the problem
\begin{equation*}
    \min_{x\in\{0,1\}^2} f(x) = x_1 + 2x_2 -3x_1x_2\,.
\end{equation*}
Using~\eqref{eq:transfo}, the equivalent $\{-1,1\}$-problem is
\begin{equation*}
    \min_{z\in\{-1,1\}^2} f_{\pm}(z) = \frac{1}{4}z_1 - \frac{1}{4}z_2 - \frac{3}{4}z_1z_2 + \frac{3}{4}\,.
\end{equation*}
Thus, the Hermitian matrix associated with the problem is
\begin{equation*}
    H_f = \frac{1}{4}Z\otimes I - \frac{1}{4}I\otimes Z - \frac{3}{4}Z\otimes Z +\frac{3}{4}I \otimes I\,.
\end{equation*}
To illustrate~\eqref{eq:eigenvectors}, we compute the eigenvalue of the canonical basis state $\ket{10}$.
\begin{align*}
    H_f\ket{10} &=  \frac{1}{4}(Z\otimes I)\ket{10} - \frac{1}{4}(I\otimes Z)\ket{10} - \frac{3}{4}(Z\otimes Z)\ket{10} +\frac{3}{4}(I \otimes I)\ket{10}\\
    &=  -\frac{1}{4}\ket{10} - \frac{1}{4}\ket{10} + \frac{3}{4}\ket{10} +\frac{3}{4}\ket{10}\\
    &= \ket{10}\\
    &= f(1,0)\ket{10}\,,
\end{align*}
because $f(1,0) = 1$.
\end{example}

Notice that most of the problems solved with QAOA in the literature are QUBO (Quadratic Unconstrained Binary Optimization) problems. Thus, $H_f$ has the specific form
\begin{equation*}
    H_f = \sum_{i}h_{ii}Z_i + \sum_{i<j}h_{ij} Z_i\otimes Z_j\,,
\end{equation*}
where $h_{ij}\in\mathbb{R},\,\forall i\leq j$. 
It is justified by the fact that solving QUBO problems to optimality is \emph{already} NP-hard, and the quantum gates of the circuit are easier to implement on hardware in that case. 

\subsection{Quantum part}
\label{def:QAOA_circuit}
QAOA is a Variational Quantum Algorithm where the quantum part derives from the Hamiltonian $H_f$. This quantum part consists of a quantum circuit with $2p$ parameters $(\vb*{\gamma},\vb*{\beta}) = (\gamma_1,\ldots,\gamma_p, \beta_1,\ldots,\beta_p) \in \mathbb{R}^{2p}$, where $p$ is called \emph{depth}. The quantum circuit $U(\vb*{\gamma},\vb*{\beta})$ is the sequence of $p$ layers of two blocks, initially applied to the uniform superposition $\ket{+}^{\otimes n}$ (see Example~\ref{ex:unif_superposition}). The first block is of the form $\Exp(H_f, \gamma)$, for $\gamma \in \mathbb{R}$, which is the unitary operator (see Definition \ref{unitmatrixfromH}) associated with the Hamiltonian $H_f$. The second block is of the form $\Exp(H_B, \beta)$, for $\beta \in \mathbb{R}$, which is the unitary operator associated with the Hamiltonian $H_B = \mathlarger\sum_{i=1}^{n}R_{X,i}(\pi)$. 
Thus, the quantum circuit is
\begin{equation}
\label{eq:quantum_circuit}
        U(\vb*{\gamma},\vb*{\beta}) = \Exp(H_B, \beta_p)\Exp(H_f, \gamma_p)\ldots\Exp(H_B, \beta_1)\Exp(H_f, \gamma_1)H^{\otimes n}\,,
\end{equation}
where the first gates applied in this circuit are $H^{\otimes n}$ to then apply the $p$ layers to state $\ket{+}^{\otimes n}$.
The three propositions that follow express the quantum circuit of QAOA with the set of universal gates (see Theorem~\ref{th:universality}). 
We give first the general decomposition of the QAOA quantum circuit defined in~\eqref{eq:quantum_circuit}, see Appendix~\ref{proof:Exp_general_circuit} for a proof.
\begin{restatable}{proposition}{Expgeneralcircuit}\label{prop:Exp_general_circuit}
The first block $\Exp(H_f, \gamma)$ parametrized by $\gamma\in\mathbb{R}$ is
\begin{equation*}
    \Exp(H_f, \gamma) = \prod_{\alpha\in\{0,1\}^n}\Exp(\bigotimes_{i=1}^n Z_i^{\alpha_i}, h_\alpha \gamma)\,.
\end{equation*}
The second block $\Exp(H_B, \beta)$ parametrized by $\beta\in\mathbb{R}$ is
\begin{equation*}
    \Exp(H_B, \beta) = \bigotimes_{i=1}^n R_{X,i}(2\beta)\,.
\end{equation*}
\end{restatable}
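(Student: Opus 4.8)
The plan is to work from the definition of the unitary operator associated with a Hamiltonian, namely $\Exp(H, t) = e^{-itH}$ (Definition~\ref{unitmatrixfromH}), and to exploit in both blocs one elementary fact about matrix exponentials: if $A_1,\ldots,A_m$ pairwise commute, then $\exp\left(-it\sum_{k=1}^m A_k\right) = \prod_{k=1}^m \exp(-it A_k)$. This identity follows from the power-series definition of the exponential, since the cross terms rearrange exactly as in the scalar case precisely when the summands commute. With this tool in hand, each bloc reduces to checking a commutativity condition and then, for the mixer, performing a single-qubit computation.

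For the first bloc I would write $H_f = \sum_{\alpha\in\{0,1\}^n} h_\alpha P_\alpha$ with $P_\alpha = \bigotimes_{i=1}^n Z_i^{\alpha_i}$. Each $P_\alpha$ is a tensor product of the diagonal matrices $I$ and $Z$, hence is itself diagonal; consequently any two summands $h_\alpha P_\alpha$ and $h_{\alpha'}P_{\alpha'}$ commute, because diagonal matrices always do. Applying the factorization identity over the index set $\{0,1\}^n$ then gives $\Exp(H_f, \gamma) = \prod_{\alpha} e^{-i\gamma h_\alpha P_\alpha} = \prod_{\alpha}\Exp(P_\alpha, h_\alpha\gamma)$, which is exactly the claimed expression.

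For the second bloc I would first record that $R_X(\pi) = X$ modulo the global phase the paper systematically discards, so that $H_B = \sum_{i=1}^n X_i$ is the standard Hermitian mixer. The operators $X_i$ act on pairwise distinct qubits, so they commute through the tensor-product structure, and the same identity yields $\Exp(H_B, \beta) = \prod_{i=1}^n e^{-i\beta X_i} = \bigotimes_{i=1}^n e^{-i\beta X}$. It then remains a one-qubit computation: using $X^2 = I$, the power series of $e^{-i\beta X}$ collapses to $\cos\beta\, I - i\sin\beta\, X$, and comparing entries with~\eqref{def:RX} identifies this matrix with $R_X(2\beta)$. Tensoring over the $n$ qubits produces $\Exp(H_B, \beta) = \bigotimes_{i=1}^n R_{X,i}(2\beta)$.

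The only genuinely delicate point is the bookkeeping of the global phase in the mixer: one must check that replacing $R_X(\pi)$ by $X$ inside $H_B$ is harmless, i.e. that the discarded phase does not resurface upon exponentiation in a way that would affect measurement statistics. Since the paper has already committed to identifying both states and gates up to global phase, this substitution is consistent, but it deserves to be flagged explicitly rather than used silently. The commutativity arguments themselves are routine once the diagonal structure (first bloc) and the disjoint-support tensor structure (second bloc) are observed.
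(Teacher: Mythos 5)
Your proof is correct and follows essentially the same route as the paper's: both factor the exponential of a sum of pairwise-commuting terms (the diagonal $\bigotimes_{i=1}^n Z_i^{\alpha_i}$ for $H_f$, the disjoint-support $X_i$ for $H_B$) and then collapse the one-qubit power series via $X^2 = I$ to get $e^{-i\beta X} = \cos\beta\, I - i\sin\beta\, X = R_X(2\beta)$. Your explicit remarks that diagonality is what grants commutativity in the first bloc, and that $R_X(\pi) = -iX$ so $H_B$ must be read as the Hermitian $\sum_{i=1}^n X_i$ (a substitution the paper's proof performs silently ``for more readability''), are minor elaborations of the same argument rather than a different approach.
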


The particular case of QUBO is mainly considered in the literature. Thus, we propose next a decomposition of the QAOA quantum circuit for this specific case, see Appendix~\ref{proof:ExpQUBO} for a proof.

\begin{restatable}{proposition}{ExpQUBO}\label{prop:ExpQUBO}
For the case of QUBO, the expression of $\Exp(H_f, \gamma)$ simplifies in
\begin{equation*}
    \Exp(H_f, \gamma) = \left( \bigotimes_{i=1}^n R_{Z,i}(2h_{ii}\gamma)\right) \prod_{i<j}\CX_{i,j}  R_{Z,j}(2h_{i,j}\gamma)\CX_{i,j}\,,
\end{equation*}
and is rather easily implemented with universal quantum gates.
\end{restatable}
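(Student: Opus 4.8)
The plan is to derive the QUBO decomposition from the general product formula of Proposition~\ref{prop:Exp_general_circuit} and then simplify its two families of factors into universal gates. Recall from Definition~\ref{unitmatrixfromH} that $\Exp(A,\theta)=e^{-i\theta A}$, which one checks is consistent with the second-bloc formula $\Exp(H_B,\beta)=\bigotimes_{i}R_{X,i}(2\beta)$ already proved. First I would specialize Proposition~\ref{prop:Exp_general_circuit}: it gives $\Exp(H_f,\gamma)=\prod_{\alpha\in\{0,1\}^n}\Exp(\bigotimes_{i=1}^n Z_i^{\alpha_i},h_\alpha\gamma)$, and for a QUBO the coefficient $h_\alpha$ vanishes unless $\alpha$ has Hamming weight $1$ or $2$. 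The weight-one terms $\alpha=e_i$ contribute $\Exp(Z_i,h_{ii}\gamma)$ and the weight-two terms $\alpha=e_i+e_j$ with $i<j$ contribute $\Exp(Z_i\otimes Z_j,h_{ij}\gamma)$, so the product collapses to $\big(\prod_i\Exp(Z_i,h_{ii}\gamma)\big)\big(\prod_{i<j}\Exp(Z_i\otimes Z_j,h_{ij}\gamma)\big)$. Because each $\bigotimes_i Z_i^{\alpha_i}$ is diagonal, all these factors commute, so I am free to group and reorder them exactly as in the statement.

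Next I would handle the linear factors by direct computation. Using $Z^2=I$ one gets $\Exp(Z_i,h_{ii}\gamma)=e^{-ih_{ii}\gamma Z_i}=\cos(h_{ii}\gamma)I-i\sin(h_{ii}\gamma)Z_i$, which on qubit $i$ is the diagonal matrix with entries $e^{-ih_{ii}\gamma}$ and $e^{ih_{ii}\gamma}$; by~\eqref{def:RZ} this is precisely $R_{Z,i}(2h_{ii}\gamma)$, with no residual global phase. Collecting these over all $i$ and using that they act on disjoint qubits produces the factor $\bigotimes_{i=1}^n R_{Z,i}(2h_{ii}\gamma)$.

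The crux is the quadratic factors, where I would prove $\Exp(Z_i\otimes Z_j,h_{ij}\gamma)=\CX_{i,j}R_{Z,j}(2h_{ij}\gamma)\CX_{i,j}$. The key algebraic fact is the conjugation rule $\CX_{i,j}Z_j\CX_{i,j}=Z_i\otimes Z_j$, which I would verify from the matrix~\eqref{def:CX}: since $\CX_{i,j}$ is an involution that swaps only the basis states $\ket{10}$ and $\ket{11}$ of the pair $(i,j)$, conjugating the diagonal operator $Z_j$ flips the sign of exactly the entries where qubit $i$ is in state $\ket{1}$, turning the sign pattern of $I\otimes Z$ into that of $Z\otimes Z$. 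Then, using the standard identity $Ve^{A}V^{-1}=e^{VAV^{-1}}$ for the unitary involution $V=\CX_{i,j}$ together with $R_{Z,j}(2h_{ij}\gamma)=e^{-ih_{ij}\gamma Z_j}$, I obtain $\CX_{i,j}R_{Z,j}(2h_{ij}\gamma)\CX_{i,j}=e^{-ih_{ij}\gamma\,\CX_{i,j}Z_j\CX_{i,j}}=e^{-ih_{ij}\gamma\,Z_i\otimes Z_j}=\Exp(Z_i\otimes Z_j,h_{ij}\gamma)$. Substituting the linear and quadratic simplifications back into the collapsed product gives the claimed expression. I expect the conjugation step to be the main obstacle: one must both establish the Pauli conjugation rule for $\CX$ and justify that conjugation commutes with the matrix exponential, after which everything reduces to bookkeeping enabled by the commutativity of the diagonal factors.
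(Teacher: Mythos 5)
Your proof is correct and follows the paper's overall skeleton---specialize Proposition~\ref{prop:Exp_general_circuit} to the weight-one and weight-two terms of a QUBO (using commutativity of the diagonal factors to group them) and identify $\Exp(Z_i,t)=R_{Z,i}(2t)$ from the power series with $Z^2=I$---but your verification of the two-qubit identity takes a genuinely different route. The paper proves $\Exp(Z_i\otimes Z_j,t)=\CX_{i,j}R_{Z,j}(2t)\CX_{i,j}$ by direct computation: it expands $\cos(t)I-i\sin(t)\,Z\otimes Z$ into the explicit diagonal matrix with entries $(e^{-it},e^{it},e^{it},e^{-it})$, computes the composition $\CX_{1,2}R_{Z,2}(2t)\CX_{1,2}$ in $2\times 2$ block form using $XR_Z(2t)X=R_Z(-2t)$, and checks that the two $4\times 4$ matrices coincide. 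You instead establish the Pauli conjugation rule $\CX_{i,j}\,Z_j\,\CX_{i,j}=Z_i\otimes Z_j$ (valid because $\CX_{i,j}$ is a permutation matrix, so conjugating a diagonal operator permutes its diagonal entries) and lift it through the exponential via $Ve^{A}V^{-1}=e^{VAV^{-1}}$, which holds termwise on the series of Definition~\ref{unitmatrixfromH}; both steps are sound, so your argument is complete. What your route buys is structure and reusability: iterating the same conjugation rule along the CNOT ladder depicted in Figure~\ref{fig:ExpGeneral} (controls on qubits $1,\ldots,N-1$, common target $N$) produces $\Exp(Z^{\otimes N},t)$ for arbitrary $N\geq 2$ in one stroke, so it would give Proposition~\ref{prop:ExpGeneral} directly, bypassing the block-matrix induction of Lemma~\ref{lem:induction}. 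What the paper's route buys is self-containedness appropriate to a tutorial: it needs nothing beyond matrix multiplication, whereas yours quietly relies on the exponential conjugation identity---which you rightly identified as the step requiring justification.
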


The decomposition in universal quantum gates of the term $\Exp(Z_i\otimes Z_j, t)$ for the specific case of QUBO (see Proposition~\ref{prop:ExpQUBO}) is mainly used in the literature. We propose in Proposition~\ref{prop:ExpGeneral} a generalization of such a decomposition for the term $\Exp(\bigotimes_{i=1}^n Z^{\alpha_i}, t)$, where the number of Z gates in effect can be bigger than two, namely, $|\{\alpha_i,\, i\in[n] : \alpha_i = 1\}| \geq 2$. This proposition enables us to overtake the QUBO problems, namely, to deal with a polynomial function $f$ with a degree strictly larger than two. 
We introduce next a technical result that will be necessary to derive the subsequent proposition, see Appendix~\ref{proof:induction} for a proof. 
\begin{restatable}{lemma}{induction}\label{lem:induction}
$\forall n\in\mathbb{N}^*$,
    $$ (I^{\otimes n-1}\otimes X)e^{-itZ^{\otimes n}}(I^{\otimes n-1}\otimes X) = e^{itZ^{\otimes n}}\,.$$
\end{restatable}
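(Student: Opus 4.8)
The plan is to reduce the statement to the single-qubit anticommutation relation $XZ=-ZX$ and then push it through the matrix exponential. The two ingredients I would isolate first are: the operator $A := I^{\otimes n-1}\otimes X$ satisfies $A^2 = I$ (because $X^2 = I$), so $A=A^{-1}$; and conjugating $Z^{\otimes n}$ by $A$ only touches the last tensor factor, where $XZX=-Z$. Using the tensor-product multiplication rule $(A_1\otimes B_1)(A_2\otimes B_2)=(A_1A_2)\otimes(B_1B_2)$ twice, I would compute
\begin{equation*}
A\,Z^{\otimes n}\,A = \left(I^{\otimes n-1}Z^{\otimes n-1}I^{\otimes n-1}\right)\otimes (XZX) = Z^{\otimes n-1}\otimes(-Z) = -Z^{\otimes n}\,.
\end{equation*}

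With this in hand, the cleanest route invokes the conjugation identity for matrix exponentials: for any invertible $P$ one has $Pe^{M}P^{-1}=e^{PMP^{-1}}$, which follows termwise from the power series since $(PMP^{-1})^k = PM^kP^{-1}$. Applying it with $P=P^{-1}=A$ and $M=-itZ^{\otimes n}$ gives
\begin{equation*}
A\,e^{-itZ^{\otimes n}}\,A = e^{-it\,A Z^{\otimes n} A} = e^{-it(-Z^{\otimes n})}=e^{itZ^{\otimes n}}\,,
\end{equation*}
which is exactly the claim.

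If one prefers to avoid the conjugation identity, I would instead expand the exponential directly: since $(Z^{\otimes n})^2 = (Z^2)^{\otimes n} = I$, grouping the even and odd terms of the power series yields the closed form $e^{-itZ^{\otimes n}} = \cos(t)\,I - i\sin(t)\,Z^{\otimes n}$. Sandwiching between the two copies of $A$ and using $A^2 = I$ together with $A Z^{\otimes n} A = -Z^{\otimes n}$ then produces $\cos(t)\,I + i\sin(t)\,Z^{\otimes n} = e^{itZ^{\otimes n}}$, the same conclusion.

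I do not expect a genuine obstacle here: the entire content is the sign flip $XZX=-Z$ localized on the final qubit, and everything else is bookkeeping. The only points deserving care are justifying the conjugation identity (or, in the alternative route, the $\cos/\sin$ closed form, which relies on $(Z^{\otimes n})^2=I$) and checking that $A$ acts as the identity on the first $n-1$ factors, so that the conjugation leaves $Z^{\otimes n-1}$ untouched. Despite the lemma being stated for all $n$, no induction is in fact required; the argument is uniform in $n$.
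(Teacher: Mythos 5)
Your proof is correct, and it takes a genuinely different route from the paper. The paper proves the lemma by induction on $n$: the base case $n=1$ is the explicit $2\times 2$ computation $Xe^{-itZ}X = e^{itZ}$, and the induction step peels off the \emph{first} tensor factor, writing $e^{-itZ^{\otimes n+1}}$ as the block-diagonal matrix $\begin{pmatrix} e^{-itZ^{\otimes n}} & 0 \\ 0 & e^{itZ^{\otimes n}}\end{pmatrix}$ and conjugating block by block. You instead localize everything on the \emph{last} factor: the single identity $XZX=-Z$ gives $A\,Z^{\otimes n}A=-Z^{\otimes n}$ for $A=I^{\otimes n-1}\otimes X$, and the conjugation identity $Pe^{M}P^{-1}=e^{PMP^{-1}}$ (justified termwise from the power series, with $P=P^{-1}=A$ since $A^2=I$) transports the sign flip through the exponential with no induction at all. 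Your argument is shorter, uniform in $n$, and more general — the same two lines show that conjugating by $X$ on any single qubit flips the sign of any $Z$-string acting on that qubit, and indeed that conjugation by any unitary anticommuting with $M$ inverts $e^{-itM}$. What the paper's approach buys is consistency with its surrounding machinery: the block decomposition \eqref{eq:decomp_Z(n+1)} it relies on is exactly the one reused in the induction proving Proposition~\ref{prop:ExpGeneral}, so the two inductions share their bookkeeping. Your fallback route via $e^{-itZ^{\otimes n}}=\cos(t)I - i\sin(t)Z^{\otimes n}$ (valid since $(Z^{\otimes n})^2=I$) is also sound and is in fact closest in spirit to the paper's own series manipulations in \eqref{eq:proofRX1}--\eqref{eq:proofRX4}; either of your variants would serve as a valid, arguably cleaner, replacement for the paper's proof.
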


The proposition that follows enables a decomposition in universal quantum gates of any quantum circuit of QAOA, see Appendix~\ref{proof:ExpGeneral} for a proof. 
\begin{restatable}{proposition}{ExpGeneral}\label{prop:ExpGeneral}
Let us consider the subsystem composed of the $N$ qubits to which the $Z$ gate is applied. Specifically, $N = |\{\alpha_i\, i\in[n] : \alpha_i = 1\}|$, and we renumber the qubits in question in $[N]$.
Thus, for $N\geq 2$, the term $\Exp(\bigotimes_{i=1}^n Z^{\alpha_i}, t)$ on this subsystem simplifies in
$$\Exp(Z^{\otimes N}, t) = \prod_{j=0}^{N-2} \CX_{1,N-j} R_{Z,N}(2t) \prod_{j=0}^{N-2} \CX_{1,N-j}\,.$$
We represent this decomposition on Figure~\ref{fig:ExpGeneral}.
\begin{figure}[H]
$$
  \Qcircuit @C=1em @R=.7em {
& \ctrl{3} & \qw & \qw & \qw & \qw & \qw & \qw & \qw& \qw & \qw& \qw & \qw & \qw & \ctrl{3} &  \qw \\
& \qw &  \ctrl{2} & \qw & \qw & \qw & \qw & \qw & \qw & \qw &  \qw& \qw & \qw &\ctrl{2} & \qw  & \qw \\
& \qw & \qw & \qw & \qddots &  & \ctrl{1} & \qw & \ctrl{1} & \qw & \reflectbox{\qddots} & & \qw & \qw & \qw & \qw\\
& \gate{X} &  \gate{X} & \qw & \qldots & &  \gate{X} & \gate{R_Z(2t)} & \gate{X} & \qw &\qldots & & \qw &\gate{X} & \gate{X} & \qw
}
$$
    \caption{Decomposition of $\Exp(Z^{\otimes N}, t)$ on the $N$-qubit subsystem.}
    \label{fig:ExpGeneral}
\end{figure}
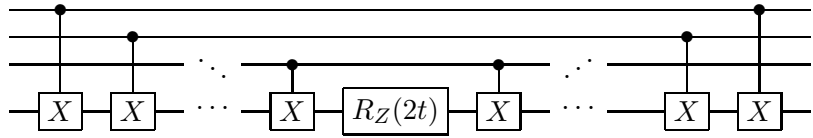
\end{restatable}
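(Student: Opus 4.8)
The plan is to prove the identity by induction on $N$, peeling one controlled-$X$ gate off each side of the sandwich at every step and invoking Lemma~\ref{lem:induction} to account for the sign it flips. Throughout I use that $\Exp(H,t)=e^{-itH}$, so in particular $R_{Z,N}(2t)=e^{-itZ_N}$ and the circuit of Figure~\ref{fig:ExpGeneral} has the sandwich form $C\,e^{-itZ_N}\,C$, where $C$ denotes the block of controlled-$X$ gates that XOR the control qubits $1,\dots,N-1$ onto the target qubit $N$ (the left block in Figure~\ref{fig:ExpGeneral}), i.e. $C=\prod_{k=1}^{N-1}\CX_{k,N}$. Since these gates share the target $N$ and pairwise commute, $C$ is an involution, so the sandwich is genuinely a conjugation $C\,e^{-itZ_N}\,C^{-1}$, and I may freely reorder the factors of $C$.

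For the base case I would take $N=1$: here $C$ is empty and the claim reduces to $R_{Z,N}(2t)=e^{-itZ}=\Exp(Z,t)$. (Equivalently one may start at $N=2$, which is the two-qubit identity $\CX_{1,2}R_{Z,2}(2t)\CX_{1,2}=\Exp(Z_1Z_2,t)$ already established in Proposition~\ref{prop:ExpQUBO}.) For the inductive step I would assume the decomposition on the $(N-1)$-qubit subsystem $\{2,\dots,N\}$, namely that $\bigl(\prod_{k=2}^{N-1}\CX_{k,N}\bigr)e^{-itZ_N}\bigl(\prod_{k=2}^{N-1}\CX_{k,N}\bigr)=e^{-itZ_2\cdots Z_N}$. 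Using commutativity to move $\CX_{1,N}$ to the outside of both blocks, the full $N$-qubit expression collapses to $\CX_{1,N}\,e^{-itZ_2\cdots Z_N}\,\CX_{1,N}$.

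To evaluate this conjugation I would decompose the gate by the state of its control qubit, $\CX_{1,N}=\ket{0}\!\bra{0}_1\otimes I + \ket{1}\!\bra{1}_1\otimes X_N$. On the $\ket{0}_1$ branch the gate is trivial and leaves $e^{-itZ_2\cdots Z_N}$ unchanged; on the $\ket{1}_1$ branch it acts as $X_N$ on both sides, and Lemma~\ref{lem:induction}, applied to the $N-1$ qubits $2,\dots,N$ (with $N$ playing the role of the last qubit), turns $X_N\,e^{-itZ_2\cdots Z_N}\,X_N$ into $e^{+itZ_2\cdots Z_N}$. Thus the operator applies the phase $e^{-itZ_2\cdots Z_N}$ on the $Z_1=+1$ eigenspace and $e^{+itZ_2\cdots Z_N}$ on the $Z_1=-1$ eigenspace, which is exactly $e^{-itZ_1Z_2\cdots Z_N}=\Exp(Z^{\otimes N},t)$.

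The main obstacle is this final recombination: one must justify cleanly that gluing $e^{-itZ_2\cdots Z_N}$ on the $Z_1=+1$ subspace to $e^{+itZ_2\cdots Z_N}$ on the $Z_1=-1$ subspace produces a single exponential $e^{-itZ_1\cdots Z_N}$. Since every operator involved is diagonal in the computational basis, I would make this rigorous by tracking the action on an arbitrary basis state $\ket{x_1\cdots x_N}$: the accumulated phase is $e^{-it(-1)^{x_1}(-1)^{x_2+\cdots+x_N}}=e^{-it(-1)^{x_1+\cdots+x_N}}$, precisely the eigenvalue of $e^{-itZ^{\otimes N}}$. This basis-state bookkeeping also serves as an independent, lemma-free cross-check of the sign conventions, since it simply records that $C$ XORs all control bits onto qubit $N$, that $R_{Z,N}(2t)$ imprints the parity-dependent phase $e^{\mp it}$, and that the second copy of $C$ uncomputes the XOR.
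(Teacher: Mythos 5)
Your proposal is correct and takes essentially the same route as the paper's own proof: the same induction on $N$ with base case supplied by Proposition~\ref{prop:ExpQUBO}, peeling the outermost $\CX_{1,N}$ off the (mutually commuting) controlled-$X$ block, applying the induction hypothesis to the inner circuit on qubits $2,\ldots,N$, and invoking Lemma~\ref{lem:induction} to flip the sign on the branch where the control is $\ket{1}$. Your control-projector decomposition $\CX_{1,N}=\ket{0}\!\bra{0}\otimes I+\ket{1}\!\bra{1}\otimes X$ together with the computational-basis phase bookkeeping is simply a restatement of the paper's explicit $2\times 2$ block-matrix computation (its equation~\eqref{eq:decomp_Z(n+1)}), so the recombination step you flag as the main obstacle is handled there in exactly the same way, just in block-diagonal notation.
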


\begin{example}
We illustrate the construction of the quantum circuit with the problem of Example~\ref{ex:prob_example} where $n=2$, for the case $p=1$. Thus, 
\begin{equation*}
    U(\gamma, \beta) = \Exp(H_B, \beta)\Exp(H_f, \gamma)\ket{+}^{\otimes 2},~~\forall \gamma, \beta\in\mathbb{R}\,.
\end{equation*}
The circuit is detailed in Figure~\ref{fig:circuit_QAOA}.
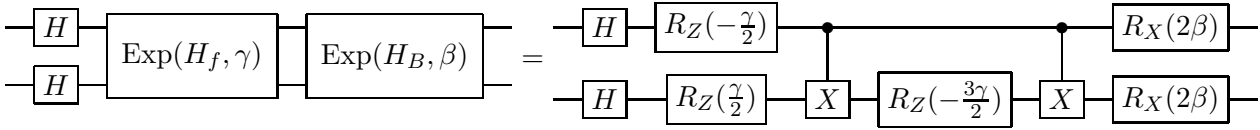
\begin{figure}[H]
    \centering
    \begin{align*}
\Qcircuit @C=1em @R=.7em {
& \gate{H} & \multigate{1}{\Exp(H_f, \gamma)} & \multigate{1}{\Exp(H_B, \beta)} & \qw \\
& \gate{H} & \ghost{\Exp(H_f, \gamma)} & \ghost{\Exp(H_B, \beta)} & \qw 
} 
\text{  }\raisebox{-1.1em}{=}\text{  }
    \Qcircuit @C=1em @R=.7em {
& \gate{H} & \gate{R_Z(-\frac{\gamma}{2})} & \ctrl{1} &  \qw & \ctrl{1} & \gate{R_X(2\beta)} & \qw\\
& \gate{H} & \gate{R_Z(\frac{\gamma}{2})} & \gate{X} &  \gate{R_Z(-\frac{3\gamma}{2})} & \gate{X}& \gate{R_X(2\beta)} & \qw
} 
\end{align*}
    \caption{QAOA circuit of Example~\ref{ex:prob_example} for $p=1$.}
    \label{fig:circuit_QAOA}
\end{figure}
Notice that we do not take into account the term $\frac{3}{4}I\otimes I$ of $H_f$ in this circuit. More generally, the term $h_{0\ldots0}I^{\otimes n}$ of $H_f$ never appears on the quantum circuit because it represents a constant term and does not influence the optimization. 
\end{example}

Notice that the choice of QAOA circuit does not ensure~\eqref{eq:incl_U}. For example, one can show that the probability of measuring 00 at the end of the circuit depicted in Figure~\ref{fig:circuit_QAOA} never reaches 1. Specifically,
\begin{equation*}
    p_{(\gamma, \beta)}(00) = |\bra{00}\Exp(H_B, \beta)\Exp(H_f, \gamma)\ket{+}^{\otimes 2}|^2 < \frac{1}{2},~~\forall \gamma, \beta\in\mathbb{R}\,.
\end{equation*}
However, QAOA satisfies another important property: it uses entangling gates. Each gate $\Exp(Z_i\otimes Z_j, t)$, for $t\in\mathbb{R}\setminus \{k\pi : k\in\mathbb{Z}\}$, entangles the qubits $i$ and $j$. Indeed, $\Exp(Z_i\otimes Z_j, t) = \CX_{i,j}  R_{Z,j}(2t)\CX_{i,j}$, and unless $R_{Z,j}(2t) = I$, namely $t\in\{k\pi : k\in\mathbb{Z}\}$, the CNOT gate operates\footnote{For $t\in\{k\pi : k\in\mathbb{Z}\},\,R_{Z,j}(2t) = I$. Thus, $\CX_{i,j}  R_{Z,j}(2t)\CX_{i,j} = \CX_{i,j}\CX_{i,j} = I$ because CNOT is its own inverse.} and creates entanglement as mentioned in Subsection~\ref{sec:entanglement}. The other gates, which are one-qubit gates, do not have this power. Entanglement is not necessary to~\eqref{eq:incl_U}. However, Remark~\ref{rem:cont_relax} can justify the use of entanglement gate. Indeed, without it, it seems unlikely to achieve better results than pure classical optimization because the optimizer essentially solves a classical continuous relaxation.

In fact, the popularity of QAOA originates essentially from the fact that it mimics the adiabatic schedule~\cite{Eq_Adiab_QAOA}. The $n$-qubit system verifies the adiabatic condition when $p\rightarrow\infty$ and ensures that for a particular set of parameters, the quantum circuit gives the exact solution. However, quantum computers' quality today makes implementations on large instances impossible. But because the number of gates of the quantum circuit is $\mathcal{O}(pn^2)$, for small depth $d$, QAOA, and more generally VQAs, can already be implemented on current NISQ computers. 

\section{Literature review for QAOA}
Many papers have recently addressed the empirical evaluation of QAOA, some also comparing it with specific implementations of VQA. We present below a non-exhaustive list of these trends, we mention several theoretical limitations for specific cases known up to now and we end with the different leverages that are at stake to improve QAOA performances. This section does not provide a complete, up-to-date overview of QAOA performances, but rather aims at illustrating trends on combinatorial problems of interest to the operations research community.

\subsection{Empirical and theoretical trends on QAOA}

Let us begin with the numerical trends of QAOA performances. All the empirical experiments are presented on small instances because quantum computers' quality today makes implementations on large instances impossible, leading to difficult conclusions. Thus, many experiments are done on classical simulators of quantum computers.  
Most of the empirical results of QAOA apply to the MAX-CUT problem because it was initially the first application of QAOA~\cite{QAOAFarhi}. 
\begin{definition}[MAX-CUT problem]
Let $G=(V,E)$ be a undirected graph. A cut in $G$ is a subset $S\subseteq V$. We define its \emph{cost} as the number of edges with one node in $S$ and one node in $V\setminus S$. The MAX-CUT problem aims at finding a cut with maximum \emph{cost}. A version with weighted edges can also be defined. 
\end{definition}
Notice that for the MAX-CUT problem, with the notations of Section~\ref{sect:QAOA}, the objective function is
\begin{equation*}
    f(x) = -\sum_{(i,j)\in E} \left(x_i(1 - x_j) + x_j(1-x_i)\right)\,,
\end{equation*}
where $x_i$ is 1 if node $i$ is in the cut, 0 otherwise. The Hermitian matrix that corresponds to this problem is
\begin{equation*}
    H_f = -\frac{1}{2}\sum_{(i,j)\in E} (1 - Z_i Z_j)\,.
\end{equation*}
The approximation ratio $r$ mainly quantifies the performance of QAOA as follows:
\begin{equation*}
    r = \frac{f^\text{QAOA}}{f^*}\,,
\end{equation*}
where $f^\text{QAOA}$ is the value returned by QAOA, and $f^*$ is the optimal value. Papers often compare this ratio with the best-known guaranteed ratio of Goemans-Williamson algorithm~\cite{GW}, specifically, $r=0.87856$. The seminal paper of QAOA~\cite{QAOAFarhi} provides a lower bound of $r$ for the specific class of 3-regular graphs for $p=1$ that is $r=0.6924$. More precise analytical expressions of the lower bound of the ratio for $p=1$ and for some other typical cases are given in~\cite{Hadfield_fermionic}.

Several empirical results on MAX-CUT spotlight patterns of optimal parameters and enable QAOA to exceed Goemans-Williamson bound for some specific instances. Some classes of MAX-CUT instances studied reveal patterns of optimal parameters. Thus, it seems to offer efficient heuristics for parameter selection and initialization. For example, the authors of~\cite{Crooks} look at the class of Erdös-Rényi graphs (random graphs where an edge appears between two nodes with probability 0.5) of size up to $17$ nodes, where the classical optimizer is an automatic differentiation with stochastic gradient descent. The authors of~\cite{Empirical_bound} examine the exhaustive set of graphs with $n\leq9$ nodes, with the gradient-based search BFGS (Broyden–Fletcher–Goldfarb–Shanno algorithm~\cite{BFGS}) for the classical optimizer. Both exhibit instances that exceed the bound of Goemans-Williamson for small depth, $p\leq 8$ and $p\leq 3$, respectively. The sets of unweighted and weighted 3-regular graphs also lead to patterns in~\cite{Zhou} for graphs of a maximum size of 22 nodes, also detected in the parameter space of the Job Shop Scheduling problem~\cite{JobShop}. But even if the performance of QAOA sometimes exceeds the Goemans-Williamson bound for the low-depth circuits, it is believed that $p$ must grow with the instance size to have a chance to outperform the best classical algorithms.

Indeed, some theoretical limits of QAOA are displayed, where the shape of the quantum state produced by the quantum circuit is at stake. The authors of \cite{Bravyi} point out that the symmetry and locality of this resulting variational state fundamentally limit the performances of QAOA. Indeed, they show that Goemans-Williamson outperforms QAOA for several instances of the MAX-CUT problem for any fixed depth $p$. Consequently, this paper suggests a non-local version of QAOA to overcome these limitations. The limits of the locality also appear when solving the problem of the Maximum Independent Set (MIS).  In~\cite{SeeAllGraph}, MIS instances are random graphs of $n$ vertices, with a fixed average degree $\bar{d}$. Thus, it proves that for depth $p\leq C\log(n)$, where $C$ is a constant depending on $\bar{d}$, QAOA cannot return an independent set better than 0.854 times the optimal for $\bar{d}$ large. Due to this locality issue, the author of~\cite{Hastings} compares QAOA with \emph{local} classical algorithms which also have this locality notion: at each step, the value of a variable is updated depending on the values of its \emph{neighbors}, i.e., the variables that share the same term within the objective function. Namely, after $t$ steps, the value of a variable depends on all information gathered in its $t$-\emph{neighborhood}. Yet, these classical algorithms still outperform QAOA. A single step of these algorithms outperforms, resp. achieves the same performance as, a single step of QAOA ($p=1$) for MAX-CUT  instances, resp. MAX-3-LIN-2 instances. Notice that given binary variables and a set of linear equations modulo 2 with exactly three variables, the MAX-3-LIN-2 problem aims at finding a variable value assignment that maximizes the number of satisfied equations.
Several other canonical combinatorial optimization problems have been tackled with QAOA in recent years. While these examples are limited to numerical tests on toy instances and do not include comparisons of performance with classical resolutions, we mention some of them to illustrate the growing interest in solving reference combinatorial problems with Variational Quantum Algorithms. For instance, in~\cite{TSP}, the authors reformulate the Traveling Salesman Problem (TSP) as an appropriate Hamiltonian matrix and test it on small instances of three and four cities. The authors of~\cite{GraphColoring} reformulate and solve the Graph Coloring problem for a dozen of nodes. The authors of~\cite{JobShop} study the Job Shop Scheduling problem, find a suitable formulation, and implement it on an artificial instance with three machines and three jobs, each of them containing one or two operations.
We end by citing a benchmark provided in~\cite{IBM_Benchmark}, that compares classical techniques, among them the Simulated Annealing, and quantum techniques including VQAs on NISQ quantum computers. It applies to the TSP and the Quadratic Assignment problem and shows that in terms of running time and quality solution, the classical methods significantly outperform the quantum ones. Notice that besides the latter paper, the mention of the computation time for VQAs in literature is rare. It could be explained by the fact that most of the experiments are tested on trivial instances and solved with quantum algorithms simulated on classical high-performance computers due to the noise on current quantum hardware~\cite{Noise_DePalma}.

\subsection{Improvements and adaptations of QAOA}
Despite the theoretical limitations displayed above, QAOA has leverages (guiding function, parametrized quantum circuit, classical optimizer, etc.) that are still of interest in the literature. We display some of these studies on different choices of leverages that empirically improve QAOA performances. 

First, the guiding function is mainly the mean function~\eqref{def:g_mean} as in the seminal paper of QAOA. However, both the CVaR function~\eqref{def:g_cvar} and the Gibbs function~\eqref{def:g_gibbs} give an alternative to the mean function and show empirical improvements. For the former, several optimization problems, such as MAX-CUT, Maximum Stable Set, MAX-3SAT, etc., are solved with QAOA in~\cite{CVaR} and show better results with faster convergence. For the latter, the authors of~\cite{Gibbs} display better results solving MAX-CUT with this guiding function. Notice that comparing these improvements is hard because they use different classical optimizers. A different method to guide the optimization, presented in~\cite{F-VQE} as a Filtering Variational Quantum Algorithm, substitutes the guiding function for \emph{filtering functions}. A \emph{filtering function} is a function associated with a \emph{filtering operator} that, given a Hamiltonian matrix and a quantum state, essentially modifies the latter by increasing the probability of eigenvectors with low eigenvalues and decreasing the probability of eigenvectors with high eigenvalues. Empirical results show improvements in the quality solution and the speed of convergence for the weighted Max-Cut problem.

Second, the choice of the quantum circuit is challenged in the literature. The underlying question is whether the quantum circuit of QAOA (see Subsection~\ref{def:QAOA_circuit}) is a good choice for finite depth $p$. Several papers suggest better circuits based on empirical results for small depth and small instances. For example, VQA with the circuit proposed in~\cite{CVaR} has better performances than QAOA, or with the \emph{Bang-Bang} circuit described in~\cite{BangBang}. The benefit of entanglement gates in the circuit is also discussed in~\cite{Nannicini_Hyb_Perf} without giving a clear advantage. Notice that the authors of~\cite{WarmStart} suggest to warm-start QAOA with either a continuous relaxation or a randomized rounding. This consists of initializing the quantum circuit with a solution of a continuous relaxation (Quadratic Programming or Semi Definite Programming), resp. with a randomly rounded solution of a continuous relaxation, instead of the state $\ket{+}^{\otimes n}$. In both cases, QAOA performances for small $p$ are better with a warm-start.

The choice of the classical optimizer represents another leverage, where both gradient-based and gradient-free optimizers can be used. The most encountered gradient-based methods used in VQAs in the literature are the Gradient Descent~\cite{GradientDescent}, the Broyden–Fletcher–Goldfarb-Shanno algorithm~\cite{BFGS}, and the stochastic optimization algorithm ADAM~\cite{ADAM}. As gradient-free methods, we can find the constrained optimization with linear approximation algorithm COBYLA~\cite{COBYLA}, the simplex-based algorithm Nelder-Mead~\cite{NelderMead}, and the simultaneous perturbation stochastic approximation algorithm SPSA~\cite{SPSA}. The authors of~\cite{Nannicini_Hyb_Perf} advise the choice of a global optimizer rather than a local optimizer to avoid numerous local optima. In~\cite{EstimDistrib_GECCO}, the authors choose the gradient-free evolutionary algorithm called Estimation of Distribution Algorithm as the classical optimizer. This algorithm generates new solutions from a probabilistic model that depends on the best solutions of the previous iterations. They empirically show that it improves the results compared to traditional optimizers as mentioned above.
The expression of the quantum circuit can also produce barren plateaus, hardening the optimization~\cite{BarrenPlateau, Sol_BarrenPlateau}. We do not list all the possible choices of VQA leverages. The survey~\cite{Cerezo} proposes other possibilities.

In parallel, several algorithms derived from QAOA are appearing in the literature. An adaptation of QAOA is the Recursive-QAOA~\cite{RQAOA}, also called RQAOA. It applies first several times QAOA to the problem in order to reduce its size, namely the number of variables, according to the correlation that appears between some variables. Then, it solves with classical brute force the resulting smaller problem. This algorithm seems to be competitive compared to QAOA for problems such as MAX-k-CUT. The authors of~\cite{Adaptative_QAOA} propose another algorithm, the Adaptive-QAOA, that converges faster than QAOA on some instances of MAX-CUT. It consists of applying recursively QAOA, increasing step by step the depth of the quantum circuit.

Eventually, there is ongoing work on the formulation and implementation of the QAOA circuit to ease and lighten QAOA implementation. For instance, the authors of~\cite{kSAT_hamiltoniancycles_GECCO} present an algorithmic method to reduce the growth of the QUBO matrix size with the problem size for the $k$-SAT problem and the Hamiltonian Cycles problem. 
Besides that, the authors of~\cite{Herrman} expose a \emph{global variable substitution method} that, given an initial linear formulation of a 3-SAT problem, exploits the advantage of product representation in QAOA and, thus, minimizes the circuit depth\footnote{Notice that here we talk about the general depth, not $p$, which is the longest path in the circuit, namely the maximum number of gates executed on a qubit.}. The shallower depth, the more efficient the implementation of the quantum circuit for NISQ computers. 
A different approach is proposed in~\cite{Nagarajan}, that is, given a quantum circuit, minimizes the circuit depth by solving a Mixed-Integer Program, with an optimality guarantee on the quantum circuit produced (it can find up to 57\% reduction of the number of gates). It applies to any gate-based quantum algorithm and thus, can be applied to QAOA and, more generally, to VQAs.

Notice that solving combinatorial optimization problems using QAOA involves first formulating them into unconstrained problems, and more precisely into QUBO for easier circuits. There are essentially two types of formulation. The first and most common one is to integrate constraints as suitable penalty terms into the objective function. For instance, in~\cite{Lucas} we find formulations of Karp's 21 NP-complete problems, and the authors of~\cite{kcoloring} tackle the $k$-coloring graph problem with the same method. The tutorial~\cite{Glover} addresses more general cases for this formulation problem methodology.
The second type of formulation is to change the expression of the \emph{mixing} Hamiltonian (referred to as $H_B$ in Section~\ref{sect:QAOA}). Initially presented in~\cite{Hadfield}, the main idea is to make this Hamiltonian varying the quantum state only in the feasible search space. 
Some problems have been specifically studied with this method, such as the Traveling Salesman problem~\cite{TSP_Ruan, TSP}.

\section{Conclusion}
In this tutorial, we have provided a mathematical description of VQAs and how they are translated into quantum algorithms. There has been a growing interest in VQAs, and more particularly QAOA, mostly because of their compatibility with NISQ computers. We have described how VQAs raise interesting questions regarding the choice of guiding functions and quantum circuits, among others. They also raise many technical questions that we did not address in this work, such as the noise at the implementation level~\cite{Lao2019MappingOQ}.

Since its inception, researchers have argued about the potential advantages of QAOA over classical optimization algorithms~\cite{Farhi_Quantum_Sup, Empirical_bound, MAXkXOR}.
This research seems to indicate that there is currently no scientific evidence that QAOA will soon beat classical heuristics. The available results even suggest the opposite, such as~\cite{Hastings} showing that local search algorithms can do better at low depth. Moreover, the current numerical results are hard to assess because they are run on small instances for which optimal or near-optimal solutions can be obtained easily with classical algorithms. Hopefully, the quickly growing capabilities of quantum computers will soon lead to a better understanding of the numerical efficiency of such algorithms~\cite{Guerreschi}.

\section*{Acknowledgements}

This work has been partially financed by the ANRT (Association Nationale de la Recherche et de la Technologie) through the PhD number 2021/0281 with CIFRE funds.

\bibliographystyle{abbrv}
\bibliography{ref}

\appendix
\section{Basics of linear algebra}
\label{sec:appendix}
This section provides some basic notions of linear algebra used throughout the tutorial. Notice that some specific definitions, such as unitary and Hermitian matrices, are introduced together with their use in quantum computing.

\begin{definition}[Tensor product]
\label{def:tensorProduct}
Let $A = \begin{pmatrix} 
    a_{11} & \dots  & a_{1m}\\
    \vdots & \ddots & \vdots\\
    a_{n1} & \dots  & a_{nm} 
    \end{pmatrix} \in \mathcal{M}_{n,m}(\C)$ and $B = \begin{pmatrix} 
    b_{11} & \dots  & b_{1q}\\
    \vdots & \ddots & \vdots\\
    b_{p1} & \dots  & b_{pq} 
    \end{pmatrix} \in \mathcal{M}_{p,q}(\C)$ be two complex matrices. Then, we define their tensor product, a bilinear operation, as 
    \begin{equation*}
    A \otimes B = \begin{pmatrix} 
    a_{11}B & \dots  & a_{1m}B\\
    \vdots & \ddots & \vdots\\
    a_{n1}B & \dots  & a_{nm}B 
    \end{pmatrix} \in \mathcal{M}_{np,mq}(\C)\,.
    \end{equation*}

where $a_{ij}B = \begin{pmatrix} 
    a_{ij}b_{11} & \dots  & a_{ij}b_{1q}\\
    \vdots & \ddots & \vdots\\
    a_{ij}b_{p1} & \dots  & a_{ij}b_{pq} 
    \end{pmatrix} \in \mathcal{M}_{p,q}(\C)$.
\end{definition} 

\begin{proposition}[Tensor product properties]
We draw attention to some useful properties of the tensor product : 
\begin{itemize}
    \item $A\otimes B \neq B\otimes A$
    \item $(A \otimes B) \otimes C = A \otimes (B \otimes C) = A \otimes B \otimes C$
    \item $A\otimes(cB) = (cA)\otimes B = c(A\otimes B)$, for $c\in\mathbb{C}$
    
\end{itemize}
\end{proposition}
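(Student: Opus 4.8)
The plan is to verify the three properties separately, proceeding from the easiest to the hardest. First, for the non-commutativity assertion $A\otimes B \neq B\otimes A$, it suffices to exhibit a single counterexample, since the claim is only that equality fails in general. I would take $A = \ket{0}$ and $B = \ket{1}$ and compute both Kronecker products directly from Definition~\ref{def:tensorProduct}: one finds $A\otimes B = \ket{01}$ whereas $B\otimes A = \ket{10}$, and these two column vectors place their single nonzero entry in different positions, so they are distinct.

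Next, the scalar-compatibility identity $A\otimes(cB) = (cA)\otimes B = c(A\otimes B)$ follows immediately from the block structure in Definition~\ref{def:tensorProduct}. The generic block of $A\otimes(cB)$ is $a_{ij}(cB) = c\,a_{ij}B$; the generic block of $(cA)\otimes B$ is $(c\,a_{ij})B = c\,a_{ij}B$; and the generic block of $c(A\otimes B)$ is $c\,a_{ij}B$. Since all three matrices have the same shape and identical corresponding blocks, they coincide, and bilinearity of the operation is exactly this statement.

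The real work, and the step I expect to be the main obstacle, is the associativity $(A\otimes B)\otimes C = A\otimes(B\otimes C)$, because of the index bookkeeping. Here I would adopt the entrywise description of the Kronecker product: for $A\in\mathcal{M}_{n,m}(\C)$ and $B\in\mathcal{M}_{p,q}(\C)$, the entry of $A\otimes B$ in row $p(i-1)+k$ and column $q(j-1)+l$ equals $a_{ij}b_{kl}$, for $i\in[n], k\in[p], j\in[m], l\in[q]$. Applying this rule twice, I would show that both $(A\otimes B)\otimes C$ and $A\otimes(B\otimes C)$, with $C\in\mathcal{M}_{r,s}(\C)$, carry the value $a_{ij}b_{kl}c_{uv}$ at the entry indexed by the row triple $(i,k,u)$ and column triple $(j,l,v)$. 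The delicate point is that the two bracketings group the composite index differently: the left side nests the pair $(i,k)$ first and then appends $u$, while the right side nests $(k,u)$ first and prepends $i$. The hard part will be checking that both groupings collapse to the same linear row index $pr(i-1)+r(k-1)+u$ (and, symmetrically, to the same column index $qs(j-1)+s(l-1)+v$); once this index identity is confirmed, the matched entries are termwise equal and the two matrices agree. The final identification with the unbracketed $A\otimes B\otimes C$ is then merely the definition of the iterated product.
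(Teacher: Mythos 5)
Your proof is correct. Note that the paper itself states this proposition without proof---it appears in the linear-algebra appendix as background material---so there is no authors' argument to compare against; what you supply is the standard verification that the paper leaves implicit. Your reading of the first bullet is the right one (it is a claim that equality fails in general, not universally, so a single counterexample suffices), and the counterexample is valid: $\ket{0}\otimes\ket{1}=(0,1,0,0)^{T}$ and $\ket{1}\otimes\ket{0}=(0,0,1,0)^{T}$ are both $4\times 1$, hence comparable, and distinct. The scalar identity is immediate from the block description, exactly as you say. For associativity, the step you flag as the main obstacle is in fact already settled correctly in your sketch: with $\rho=p(i-1)+k$ the row index in $(A\otimes B)\otimes C$ is $r(\rho-1)+u=pr(i-1)+r(k-1)+u$, which coincides with the row index $pr(i-1)+\bigl(r(k-1)+u\bigr)$ in $A\otimes(B\otimes C)$, and the column computation $s\bigl(q(j-1)+l-1\bigr)+v=qs(j-1)+s(l-1)+v$ is symmetric; since both sides are $npr\times mqs$ matrices carrying the entry $a_{ij}b_{kl}c_{uv}$ at matching positions, they agree, and the unbracketed $A\otimes B\otimes C$ is then just notation licensed by this equality. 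One cosmetic caveat: your closing remark that bilinearity ``is exactly this statement'' overreaches slightly---bilinearity also requires additivity in each argument, e.g.\ $A\otimes(B+B')=A\otimes B+A\otimes B'$, which is not among the listed items---but this does not affect the proof of what is actually claimed.
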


\begin{definition}[Braket notation]
We also introduce the \emph{braket} notation used in quantum computing. Let $\psi = \begin{pmatrix}
    \psi_1\\
    \vdots \\
    \psi_{2^n}
\end{pmatrix} \in\C^{2^n}$ be a column vector. We note $\ketpsi$, said \emph{ket} $\psi$, the vector $\psi$ itself. Thus, we define $\bra{\psi}$, said \emph{bra} $\psi$, the conjugate transpose of $\ketpsi$. Specifically,
\begin{align*}
\ket{\psi} &= \psi = \begin{pmatrix}
    \psi_1 \\
    \vdots \\
    \psi_{2^n}
\end{pmatrix}\,,\\
\bra{\psi} &= \bar{\psi}^T = \begin{pmatrix}
    \bar{\psi_1} & \ldots &  \bar{\psi_{2^n}}\\
\end{pmatrix}\,.
\label{eq:braket}
\end{align*}
Thus, \emph{ket} vectors are always column vectors whereas \emph{bra} vectors are row vectors. 
With this notation, the norm of the vector $\psi$ is
\begin{equation*}
    \braket{\psi} = \mathlarger\sum_{i=1}^{2^n} |\psi_i|^2\,.
\end{equation*} 
We also consider the complex inner product
\begin{equation*}
    \psi, \phi \mapsto \braket{\psi}{\phi}\,,
\end{equation*}
where we use the notation $\bra{\psi}\cdot\ket{\phi} = \braket{\psi}{\phi}$. Notice that it is not commutative. Indeed, for $\psi = \begin{pmatrix}
    \psi_1 \\
    \vdots \\
    \psi_{2^n}
\end{pmatrix} \in\C^{2^n}$ and $\phi = \begin{pmatrix}
    \phi_1 \\
    \vdots \\
    \phi_{2^n}
\end{pmatrix} \in \C^{2^n}$, we have : 
\begin{equation*}
    \begin{split}
        \braket{\phi}{\psi} &= \begin{pmatrix}
    \bar{\phi_1} & \ldots &  \bar{\phi_{2^n}}\\
\end{pmatrix}\begin{pmatrix}
    \psi_1 \\
    \vdots \\
    \psi_{2^n}
\end{pmatrix} \\
&= \mathlarger\sum_{i=1}^{2^n} \bar{\phi_i}\psi_i\\
& \neq \mathlarger\sum_{i=1}^{2^n} \phi_i\bar{\psi_i} 
= \braket{\psi}{\phi}\,.
    \end{split}
\end{equation*}
\end{definition}

\begin{definition}[Unitary operator associated with Hermitian matrix]
\label{unitmatrixfromH}
Given a Hermitian matrix $A$, we define its associated quantum gate $\Exp(A, t)$ parametrized by the parameter $t\in\mathbb{R}$ as :
\begin{align*}
    \Exp(A, t) &= e^{-iAt}\\
    &= \mathlarger\sum_{k=0}^{\infty} \frac{1}{k!}(-i)^kt^kA^k\,.
\end{align*}
Because $A$ is Hermitian, $\Exp(A,t)$ is a unitary matrix.
\end{definition}

\section{Omitted proofs}
\label{appendix:proofs}

\subsection{Guiding function properties}
\label{sec:proof:guiding_func}
\subsubsection{Proof of Proposition~\ref{prop:g_mean_guinding}}
\label{proof:g_mean_guinding}
\gmeanguinding*
\begin{proof}
Let us prove that $g_{\text{mean}}$ is continuous.
Let $x\in\{0,1\}^n$. The function $\theta \mapsto p_\theta(x) = |\bra{x}U(\theta)\ket{0_n}|^2$ is continuous, because each coefficient of $U(\theta)$ is continuous. Thus, because multiplication and addition preserve continuity, $g_{\text{mean}}$ is continuous.

We prove by contradiction that~\eqref{eq:incl_guiding} holds. Let $\theta\in\G$ and let us consider the quantum state $\ket{\psi(\theta)} = U(\theta)\ket{0_n}$. Its decomposition in the canonical basis is $\ket{\psi(\theta)} = \sum_{x \in \{0,1\}^n}\psi_x\ket{x}$.

Assume that $\ket{\psi(\theta)} \notin \Fq$. By definition, there exists $x_0\in \{0,1\}^n$ such that $x_0\notin \F$ and $|\psi_{x_0}|\neq 0$.
Thus, 
\begin{align*}
    g_{\text{mean}}(\theta) 
    &= \sum_{x \in \{0,1\}^n} |\psi_x|^2f(x)\\
    &= \sum_{x \in \F} |\psi_x|^2f(x) + \sum_{x \notin \F} |\psi_x|^2f(x)\\
    &= \left(\sum_{x \in \F} |\psi_x|^2\right)f^* + \sum_{x \notin \F} |\psi_x|^2f(x)\,,
\end{align*}
where $f^*$ is the optimal value of $f$, reached on $\F$. By definition, $f(x) > f^*\,, \forall x\notin \F$, and because we assume that $|\psi_{x_0}|\neq 0$, thus the second term of the sum is bounded below as follows:
$\sum_{x \notin \F} |\psi_x|^2f(x) > \left(\sum_{x \notin \F} |\psi_x|^2\right)f^*$, where $\sum_{x \notin \F} |\psi_x|^2 \geq |\psi_{x_0}|^2 > 0$.
Thus, 
\begin{align*}
    g_{\text{mean}}(\theta) 
    &> \left(\sum_{x \in \F} |\psi_x|^2\right)f^* + \left(\sum_{x \notin \F} |\psi_x|^2\right)f^*
    = f^*\,.
\end{align*}
This contradicts the previous statement that $\theta \in\G$, as one readily verifies that the minimum of $g_{\text{mean}}$ is $g_{\text{mean}}^* = f^*$.
\end{proof}

\subsubsection{Proof of Proposition~\ref{prop:g_gibbs_guiding}}
\label{proof:g_gibbs_guiding}
\ggibbsguiding*
\begin{proof}
Let us prove that $g_{G,\eta}$ is continuous.
Let $x\in\{0,1\}^n$. The function $\theta \mapsto p_\theta(x) = |\bra{x}U(\theta)\ket{0_n}|^2$ is continuous, because each coefficient of $U(\theta)$ is continuous. Thus, because multiplication, addition, and composition preserve continuity, $g_{G,\eta}$ is continuous.

Let $\eta>0$. Let us prove by contradiction that~\eqref{eq:incl_guiding} holds. As before, let $\theta\in\G$ and let us consider the quantum state $\ket{\psi(\theta)} = U(\theta)\ket{0_n}$. Its decomposition in the canonical basis is  $\ket{\psi(\theta)} = \sum_{x \in \{0,1\}^n}\psi_x\ket{x}$. Assume that $\ket{\psi(\theta)} \notin \Fq$. Thus, there exists $x_0\in \{0,1\}^n$ such that $x_0\notin \F$ and $|\psi_{x_0}|\neq 0$.
Thus, 
\begin{align*}
    g_{G,\eta}(\theta) 
    &= -\ln\left({\sum_{x\in\{0,1\}^n}|\psi_x|^2e^{-\eta f(x)}} \right)\\
    &= -\ln\left({\sum_{x \in \F} |\psi_x|^2e^{-\eta f(x)} + \sum_{x \notin \F} |\psi_x|^2e^{-\eta f(x)}} \right)\\
    &= -\ln\left({\left(\sum_{x \in \F} |\psi_x|^2\right)e^{-\eta f^*} + \sum_{x \notin \F} |\psi_x|^2e^{-\eta f(x)}}\right)\,,
\end{align*}
where $f^*$ is the optimal value of $f$, reached on $\F$. By definition, $f(x) > f^*\,, \forall x\notin \F$, and because $\eta>0$, we have $e^{-\eta f(x)} < e^{-\eta f^*}$. Moreover, we assume that $|\psi_{x_0}|\neq 0$, thus the second term of the sum in the logarithm is bounded above as follows:
$\sum_{x \notin \F} |\psi_x|^2e^{-\eta f(x)} < \left(\sum_{x \notin \F} |\psi_x|^2\right)e^{-\eta f^*}$, where $\sum_{x \notin \F} |\psi_x|^2 \geq |\psi_{x_0}|^2 > 0$.
Thus, because $y \mapsto -\ln(y)$ is a decreasing function,
\begin{align*}
    g_{G,\eta}(\theta)  
    &> -\ln\left({\left(\sum_{x \in \F} |\psi_x|^2\right)e^{-\eta f^*} + \left(\sum_{x \notin \F} |\psi_x|^2\right)e^{-\eta f^*}}\right)
    = \eta f^*\,.
\end{align*}
This contradicts the previous statement that $\theta \in\G$. Indeed, we can easily verify that the minimum of $g_{G,\eta}$ is $g_{G,\eta}^* = \eta f^*$.
\end{proof}

\subsubsection{Proof of Proposition~\ref{prop:g_CVaR}}
\label{proof:g_CVaR}
\gCVaR*
\begin{proof}
For the same reason as for $g_\text{mean}$, $g_{C,\alpha}$ is continuous.

Let $\alpha \in ]0,1[\,,\theta\in\G$ and let us consider the quantum state $\ket{\psi(\theta)} = U(\theta)\ket{0_n}$. Its decomposition in the canonical basis is $\ket{\psi(\theta)} = \sum_{x \in \{0,1\}^n}\psi_x\ket{x}$. 

Let us prove that either $\ket{\psi(\theta)} \in \Fq$ or that $\mathlarger\sum_{x\notin \F}|\psi_{x}|^2 =  \mathlarger\sum_{x=N_\F + 1}^{2^n} |\psi_{x_i}|^2 < 1 - \alpha$, where $N_\F$ is the index that delimits $\F$, specifically, $$\F = \{x_i, i\in[N_\F]\}\,.$$ 
There are two cases for $N_\alpha$ (we recall that $N_\alpha$ depends on $\theta$):
\begin{itemize}
    \item If $N_\alpha \leq N_\F$, thus \begin{align*}
        g_{C,\alpha}(\theta) &= \frac{1}{\mathlarger\sum_{i\in[N_\alpha]\subseteq[N_\F]}|\psi_{x_i}|^2}\sum_{i\in[N_\alpha]\subseteq[N_\F]}|\psi_{x_i}|^2f(x_i)\\ &= f^*\,,
    \end{align*}
    where $f^*$ is the optimal value of $f$, reached on $\F$. This contradicts~\eqref{eq:incl_guiding} but the probability of sampling non-optimal solutions when measuring $\ket{\psi(\theta)}$ is strictly lower than $1 - \alpha$. Indeed, by definition of $N_\alpha$, 
    $$ \mathlarger\sum_{i=1}^{N_\F}|\psi_{x_i}|^2 \geq \mathlarger\sum_{i=1}^{N_\alpha}|\psi_{x_i}|^2 \geq \alpha\,. $$ 
    \item Otherwise, $N_\alpha > N_\F$. Let us prove by contradiction that $\ket{\psi(\theta)} \in \Fq$.
    Assume that $\ket{\psi(\theta)} \notin \Fq$. Thus, there exists $k>N_\F$ such that $|\psi_k|\neq 0$. We can show that $g_{C,\alpha}(\theta) > f^*$ using essentially the same proof as that of Proposition~\ref{prop:g_mean_guinding} for $x_0 = \min\{k > N_\F : |\psi_k|\neq 0\}$.  
    This contradicts the statement that $\theta\in\G$, because the minimum of $g_{C,\alpha}$ is $g_{C,\alpha}^* = f^*$.
\end{itemize}
\end{proof}

\subsection{Quantum circuit of QAOA}
\label{sec:proof:quantum_circuit}
\subsubsection{Proof of Proposition~\ref{prop:Exp_general_circuit}}
\label{proof:Exp_general_circuit}
\Expgeneralcircuit*
\begin{proof}
Let $\gamma\in\mathbb{R}$ and let us consider the first block $\Exp(H_f, \gamma)$. By Definition~\ref{unitmatrixfromH}, and because each pair of matrices of the family $\{\bigotimes_{i=1}^n Z_i^{\alpha_i} : \alpha= (\alpha_1,\ldots,\alpha_n)\in\{0,1\}^n\}$ commutes two by two, 
\begin{align}
    \Exp(H_f, \gamma) &= e^{-i(\sum_{\alpha = (\alpha_1,\ldots,\alpha_n) \in \{0,1\}^n} h_{\alpha}\bigotimes_{i=1}^n Z_i^{\alpha_i})\gamma}\label{eq:prod_exp1}\\
    &= \prod_{\alpha = (\alpha_1,\ldots,\alpha_n)\in \{0,1\}^n} e^{-ih_{\alpha}\bigotimes_{i=1}^n Z_i^{\alpha_i}\gamma}\\
    &= \prod_{\alpha = (\alpha_1,\ldots,\alpha_n)\in \{0,1\}^n}\Exp(\bigotimes_{i=1}^n Z_i^{\alpha_i}, h_{\alpha}\gamma)
    \label{eq:prod_exp2}\,.
\end{align}
Let $\beta\in\mathbb{R}$ and let us consider the second block $\Exp(H_B, \beta)$. For more readability, we write $X_i=R_{X,i}(\pi)$ the application of matrix $X=\begin{pmatrix}
    0&1\\1&0
\end{pmatrix}$ on qubit $i$. With the same development as above, and because each pair of matrices of the family $\{X_i: i\in[n]\}$ commutes two by two, 
\begin{equation*}
    \Exp(H_B, \beta) = \prod_{i=1}^n\Exp(X_i, \beta)\,.
\end{equation*}
Let $i\in[n]$. Thus, 
\begin{align}
\Exp(X_i,\beta) &=
    \mathlarger\sum_{k=0}^{\infty} \frac{1}{k!}(-i)^k\beta^kX_i^k\\
    &= \mathlarger\sum_{k=0}^{\infty} \frac{1}{(2k)!}(-i)^{2k}\beta^ {2k}X_i^{2k} + \mathlarger\sum_{k=0}^{\infty} \frac{1}{(2k+1)!}(-i)^{(2k+1)}\beta^ {(2k+1)}X_i^{(2k+1)}\\
    \label{eq:proofRX1}
    &= \mathlarger\sum_{k=0}^{\infty} \frac{1}{(2k)!}(-i)^{2k}\beta^ {2k}I + \mathlarger\sum_{k=0}^{\infty} \frac{1}{(2k+1)!}(-i)^{(2k+1)}\beta^ {(2k+1)}X_i\\
    \label{eq:proofRX2}
    &= \mathlarger\sum_{k=0}^{\infty} \frac{1}{(2k)!}(-1)^{k}\beta^ {2k}I -i \mathlarger\sum_{k=0}^{\infty} \frac{1}{(2k+1)!}(-1)^{k}\beta^ {(2k+1)}X_i\\
    &= \cos(t)I -i\sin(t)X_i\label{eq:proofRX4}\\
    &= R_{X,i}(2\beta)\,,
\end{align}
where line~\eqref{eq:proofRX1} exploits the fact that $X^2 = I$, implying $X^{2k} = I$ and $X^{2k+1} = X$. Moreover, line~\eqref{eq:proofRX2} applies the definition of the complex number $i$, and one can recognize the power series of cosinus and sinus functions. 
\end{proof}

\subsubsection{Proof of Proposition~\ref{prop:ExpQUBO}}
\label{proof:ExpQUBO}
\ExpQUBO*
\begin{proof}
Let $\gamma\in\mathbb{R}$. 
The application of~\eqref{eq:prod_exp1}--\eqref{eq:prod_exp2} to the case of QUBO gives
\begin{align*}
    \Exp(H_f, \gamma) &= e^{-i(\sum_{i=1}^n h_{ii}Z_i + \sum_{i<j}h_{ij} Z_i\otimes Z_j)\gamma}\\
    &= \prod_{i=1}^n e^{-iZ_i h_{ii}\gamma}\prod_{i<j}e^{-i Z_i\otimes Z_j h_{ij}\gamma}\\
    &= \prod_{i=1}^n \Exp(Z_i, h_{ii}\gamma)\prod_{i<j}\Exp(Z_i\otimes Z_j, h_{ij}\gamma)\,.
\end{align*}
Then, let us prove that, for $i\in [n]$ and $t\in\mathbb{R}$, $\Exp(Z_i, t) = R_{Z,i}(2t)$. The same development as above~\eqref{eq:proofRX1}--\eqref{eq:proofRX4}, replacing $X$ by $Z$ that have the same property $Z^2 = I$, gives
\begin{align}
\Exp(Z_i,t)
    &= \cos(t)I -i\sin(t)Z_i \label{eq:proofRZ3}\\
    &= R_{Z,i}(2t)
    \label{eq:proofRZ4}\,.
\end{align}
Eventually, line~\eqref{eq:proofRZ3} is the application of the gate
$\begin{pmatrix}
    e^{-it} & 0\\
    0 & e^{it} 
\end{pmatrix} = R_Z(2t)$ on qubit $i$, and the identity on the others. 

It remains to prove that, for $i<j \in [n]$ and $t\in\mathbb{R}$, $\Exp(Z_i\otimes Z_j, t) = \CX_{i,j}  R_{Z,j}(2t)\CX_{i,j}$. Following the same developments as above, we have 
\begin{equation}
\label{proofRZZ}
    \Exp(Z_i\otimes Z_j, t) =  \cos(t)I -i\sin(t)Z_i\otimes Z_j\,.
\end{equation}
We consider the two-qubit system that corresponds to the qubit $i$ as the first qubit and the qubit $j$ as the second qubit (the others are unchanged by the transformation). Thus, it remains to prove the equality of the two circuits depicted on Figure~\ref{fig:equ_circuit}.
\begin{figure}[H]
    \centering
    \begin{align*}
\Qcircuit @C=1em @R=.7em {
 & \multigate{1}{\Exp(Z_1\otimes Z_2, t)} & \qw \\
 & \ghost{\Exp(Z_1\otimes Z_2, t)} & \qw 
} \text{  }\raisebox{-.9em}{=}\text{  }
    \Qcircuit @C=1em @R=.7em {
 & \ctrl{1} &  \qw & \ctrl{1} & \qw\\
 & \gate{X} &  \gate{R_Z(2t)} & \gate{X}&  \qw
} 
\end{align*}
    \caption{Decomposition of $\Exp(Z_1\otimes Z_2, t)$ into universal gates.}
    \label{fig:equ_circuit}
\end{figure}
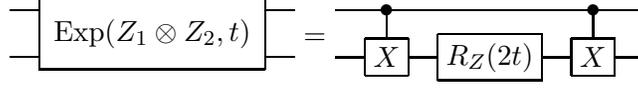

On the one hand,~\eqref{proofRZZ} is the application of the gate $ = \begin{pmatrix}
    R_Z(2t) & 0\\
    0 & R_Z(-2t)
\end{pmatrix}
$ to this system. Indeed,
\begin{align*}
   \cos(t)I -i\sin(t)Z_1\otimes Z_2 &= \cos(t)\begin{pmatrix}
    1 & 0 & 0 & 0 \\
    0 & 1 & 0 & 0 \\
    0 & 0 & 1& 0 \\
    0 & 0 & 0 & 1
\end{pmatrix} -i\sin(t)\begin{pmatrix}
    1 & 0 & 0 & 0 \\
    0 & -1 & 0 & 0 \\
    0 & 0 & -1& 0 \\
    0 & 0 & 0 & 1
\end{pmatrix} \\
&= \begin{pmatrix}
    e^{-it} & 0 & 0 & 0 \\
    0 & e^{it} & 0 & 0 \\
    0 & 0 & e^{it} & 0 \\
    0 & 0 & 0 & e^{-it} 
\end{pmatrix} \\
&= \begin{pmatrix}
    R_Z(2t) & 0\\
    0 & R_Z(-2t)
\end{pmatrix}\,.
\end{align*}
On the other hand, the composition of gates $\CX_{1,2}  R_{Z,2}(2t)\CX_{1,2}$ on this system amounts to
\begin{align*}
    \CX_{1,2}R_{Z,2}(2t)\CX_{1,2} &= \begin{pmatrix}
        I & 0 \\
        0 & X
    \end{pmatrix}\begin{pmatrix}
        R_Z(2t) & 0 \\
        0 & R_Z(2t)
    \end{pmatrix}\begin{pmatrix}
        I & 0 \\
        0 & X
    \end{pmatrix}\\
    &= \begin{pmatrix}
        R_Z(2t) & 0 \\
        0 & XR_Z(2t)X
    \end{pmatrix}\\
    &= \begin{pmatrix}
        R_Z(2t) & 0 \\
        0 & R_Z(-2t)
    \end{pmatrix}\,.
\end{align*}
Thus, the proof results from replacing $t$ by appropriate values $h_{ii}\gamma$ for $i\in[n]$ and $h_{ij}\gamma$ for $i< j$. 
\end{proof}

\subsubsection{Proof of Lemma~\ref{lem:induction}}
\label{proof:induction}
\induction*
\begin{proof}
Let $P_n$ be the statement
\begin{equation*}
(I^{\otimes n-1}\otimes X)e^{-itZ^{\otimes n}}(I^{\otimes n-1}\otimes X) = e^{itZ^{\otimes n}}\,.    
\end{equation*}
Let us prove by induction that $P_n$ holds for all integer $n\in\mathbb{N}^*$.\\
\textbf{Base case:} Let us prove $P_1$.
According to~\eqref{eq:proofRZ4}, $e^{-itZ} = R_Z(2t)$, thus, 
\begin{align*}
    Xe^{-itZ}X &= \begin{pmatrix}
        0&1\\1&0
    \end{pmatrix}\begin{pmatrix}
        e^{-it}&0\\0&e^{it}
    \end{pmatrix}\begin{pmatrix}
        0&1\\1&0
    \end{pmatrix}\\
    &= \begin{pmatrix}
        e^{it}&0\\0&e^{-it}
    \end{pmatrix}\\
    &= e^{itZ}\,.
\end{align*}
\textbf{Induction step:} Let $n\geq 1$ be given and suppose $P_n$. Let us prove $P_{n+1}$.
According to~\eqref{eq:decomp_Z(n+1)}, we have $
    e^{-itZ^{\otimes n+1}} = \begin{pmatrix}
    e^{-itZ^{\otimes n}}&0\\0 & e^{itZ^{\otimes n}}
\end{pmatrix}$. Thus,
\begin{align*}
    (I^{\otimes n}\otimes X)e^{-itZ^{\otimes n+1}}(I^{\otimes n}\otimes X) &= (I\otimes I^{\otimes n-1}\otimes X)e^{-itZ^{\otimes n+1}}(I\otimes I^{\otimes n-1}\otimes X)\\
    &= \begin{pmatrix}
        I^{\otimes n-1}\otimes X&0\\0&I^{\otimes n-1}\otimes X
    \end{pmatrix}\begin{pmatrix}
    e^{-itZ^{\otimes n}}&0\\0 & e^{itZ^{\otimes n}}
    \end{pmatrix}
    \begin{pmatrix}
        I^{\otimes n-1}\otimes X&0\\0&I^{\otimes n-1}\otimes X
    \end{pmatrix}\\
    &= \begin{pmatrix}
        (I^{\otimes n-1}\otimes X)e^{-itZ^{\otimes n}}(I^{\otimes n-1}\otimes X)&0\\0&(I^{\otimes n-1}\otimes X)e^{itZ^{\otimes n}}(I^{\otimes n-1}\otimes X)
    \end{pmatrix}\,.
\end{align*}
By induction hypothesis $P_n$, 
\begin{align*}
    (I^{\otimes n}\otimes X)e^{-itZ^{\otimes n+1}}(I^{\otimes n}\otimes X) 
    &= \begin{pmatrix}
        e^{itZ^{\otimes n}}&0\\0&e^{-itZ^{\otimes n}}
    \end{pmatrix}\\
    &= e^{itZ^{\otimes n+1}}\,.
\end{align*}
\end{proof}

\subsubsection{Proof of Proposition~\ref{prop:ExpGeneral}}
\label{proof:ExpGeneral}
\ExpGeneral*
\begin{proof}
Let $P_n$ be the statement
\begin{equation*}
\Exp(Z^{\otimes n}, t) = \prod_{j=0}^{n-2} \CX_{1,n-j} R_{Z,n}(2t) \prod_{j=0}^{n-2} \CX_{1,n-j}\,.    
\end{equation*}
Let us prove by induction that $P_n$ holds for all integer $n\geq 2$.\\
\textbf{Base case:} Proposition~\ref{prop:ExpQUBO} proves $P_2$. \\
\textbf{Induction step:} Let $n\geq 2$ be given and suppose $P_n$. Let us prove $P_{n+1}$. \\
On the one hand, $$\Exp(Z^{\otimes n+1}, t) = e^{-itZ^{\otimes n+1}} = e^{-itZ\otimes Z^{\otimes n}}$$ where $Z\otimes Z^{\otimes n} = \begin{pmatrix}
    Z^{\otimes n}&0\\0 & -Z^{\otimes n}
\end{pmatrix}$. This latter matrix is diagonal, thus, 
\begin{equation}
\label{eq:decomp_Z(n+1)}
    \Exp(Z^{\otimes n+1}, t) = \begin{pmatrix}
    e^{-itZ^{\otimes n}}&0\\0 & e^{itZ^{\otimes n}}
\end{pmatrix}\,.
\end{equation}
On the other hand, we compute the term
\begin{equation*}
\prod_{j=0}^{n-1} \CX_{1,n+1-j} R_{Z,n+1}(2t) \prod_{j=0}^{n-1} \CX_{1,n+1-j} =   \CX_{1,n+1}\left(\prod_{j=1}^{n-2} \CX_{1,n+1-j} R_{Z,n+1}(2t) \prod_{j=1}^{n-2} \CX_{1,n+1-j}\right) \CX_{1, n+1}
\end{equation*}
that is represented on Figure~\ref{fig:circuit_exp_proof}, where $e^{-itZ^{\otimes n}}$ applies on the qubits $2$ to $n$ by induction hypothesis. 
\begin{figure}[H]
$$
    \Qcircuit @C=1em @R=.7em {
& \ctrl{3} & \qw & \ctrl{3} & \qw \\
& \qw & \multigate{2}{e^{-itZ^{\otimes n}}} & \qw & \qw \\
& \qw & \ghost{e^{-itZ^{\otimes n}}} & \qw & \qw\\
& \gate{X} & \ghost{e^{-itZ^{\otimes n}}} & \gate{X} & \qw
}
$$
\caption{Circuit representation of $\mathlarger\prod_{j=0}^{n-1} \CX_{1,n+1-j} R_{Z,n+1}(2t) \mathlarger\prod_{j=0}^{n-1} \CX_{1,n+1-j}\,.$}
\label{fig:circuit_exp_proof}
\end{figure}
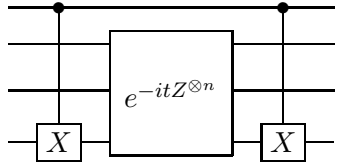
Thus, 
\begin{align*}
    \prod_{j=0}^{n-1} \CX_{1,n+1-j} R_{Z,n+1}(2t) \prod_{j=0}^{n-1} \CX_{1,n+1-j} &= \CX_{1,n+1}(I\otimes  e^{-itZ^{\otimes n}})\CX_{1,n+1} \\
    &= \begin{pmatrix}
        I^{\otimes n}&0\\0&I^{\otimes n-1}\otimes X
    \end{pmatrix}\begin{pmatrix}
        e^{-itZ^{\otimes n}}&0\\0&e^{-itZ^{\otimes n}}
    \end{pmatrix}\begin{pmatrix}
        I^{\otimes n}&0\\0&I^{\otimes n-1}\otimes X
    \end{pmatrix}\\
    &= \begin{pmatrix}
        e^{-itZ^{\otimes n}}&0\\0&(I^{\otimes n-1}\otimes X)e^{-itZ^{\otimes n}}(I^{\otimes n-1}\otimes X)
    \end{pmatrix}\\
    &= \begin{pmatrix}
        e^{-itZ^{\otimes n}}&0\\0&e^{itZ^{\otimes n}}
    \end{pmatrix}\,,
\end{align*}
where the last line comes from the application of Lemma~\ref{lem:induction}. Thus, 
$$ \Exp(Z^{\otimes n+1}, t) = \prod_{j=0}^{n-1} \CX_{1,n+1-j} R_{Z,n+1}(2t) \prod_{j=0}^{n-1} \CX_{1,n+1-j}\,,$$
proving $P_{n+1}$.
\end{proof}

\end{document}